\newtheorem{lemma}{Lemma}[section]
\newtheorem{theorem}[lemma]{Theorem}
\newtheorem{theorem*}[]{Theorem}
\newtheorem{remark}[lemma]{Remark}
\newtheorem{proposition}[lemma]{Proposition}
\newtheorem{definition}[lemma]{Definition}
\newtheorem{example}[lemma]{Example}
\newcommand{\dd}{\textup{d}}
\begin{document}
	
\title{\bf On the Saito number of plane curves.}
	
\author{{\sc Yohann Genzmer and Marcelo E. Hernandes}\thanks{The first-named author was partially supported by the \emph{R\'eseau Franco-Brésilien en Mathématiques} (GDRI-RFBM). The second-named author was partially supported by CNPq-Brazil.}}
	
\date{}

\providecommand{\keywords}[1]
{
  \small	
  \textbf{\textit{Keywords \   --}} #1
}

\providecommand{\AMSC}[1]
{
  \small	
  \textbf{\textit{MSC 2024 --}} #1
}

\maketitle
\begin{abstract}
In this work we study the \emph{Saito number} of a plane curve and we present a method to determine the minimal Saito number for plane curves in a given equisingularity class, that gives rise to an actual algorithm. In particular situations, we also provide various formulas for this number. In addition, if $\nu_0$ and $\nu_1$ are two coprime positive integers and $N>0$ then we show that for any $1\leq k\leq \left [\frac{N\nu_0}{2}\right ]$ there exits a plane curve equisingular to the curve $$y^{N\nu_0}-x^{N\nu_1}=0$$ such that its Saito number is precisely $k$. 
\end{abstract}

\keywords{Saito number, Saito module, planar foliations, plane curves.}

\AMSC {14H20 ; 32S65, 14B05}

	
\section*{Introduction}

In his seminal article on the theory of logarithmic differential forms \cite{MR586450}, K. Saito noticed that, in dimension $2$, the module of logarithmic differential $1-$forms leaving invariant a given germ of complex plane curve $C$ is always free of rank $2$. Nowadays, known as \emph{the Saito module} of $C$ and denoted by $\Omega^1\left(\log C\right)$, it appears somehow to encode a certain amount of information about the curve itself. Of special interest are the valuations of these logarithmic differential $1-$forms and, in particular, the minimal valuation called \emph{the Saito number} of $C$, 
\[\mathfrak{s}\left(C\right)=\min_{\omega \in \Omega^1\left(\log C\right)} \nu\left(\omega \right).\]
In a series of articles, the first author studied the Saito number of a \emph{generic} curve in its equisingularity class \cite{YoyoBMS,moduligenz,genzmer2024saito,genzmer2020saito}, and as a product established various formulas or algorithms to provide the number of moduli of $C$. In \cite{MR4082253}, the two authors highlighted a link between the Tjurina number of $C$ and its Saito number, still in a generic situation. Recently, in \cite{cano2024computing}, F. Cano, N. Corral and D. Senovilla-Sanz derived from an algorithm introduced by Delorme \cite{Delorme1978,HH-algorithm}, a procedure to construct a basis of the Satio module for an irreducible curve whose has only one Puiseux pair. Moreover, in \cite{ayuso2024construction}, for a \emph{generic} irreducible curve, P. Fortuny Ayuso and J. Ribon proposed an algorithm that leads to the computation of a basis of the Saito module.
Today, the situation is ripe for further study of the non-generic case and this is the main goal of this article.

In the first section, we study the minimal Saito number in a given equisingularity class $\textup{Top}(C)$, which somehow, corresponds to the \emph{less generic} situation. We prove the following

\begin{theorem*}\label{THEO1}
    Let $C$ be a germ of complex curve in $\left(\mathbb{C}^2,0\right)$. Let $b$ be the number of punctual blowing-ups in the minimal desingularization process of $C$. Then, there exists an algorithm whose complexity is $O\left(2^b\right)$ that provides the following minimum
    \[\min_{C^\prime\in  \textup{Top}(C)}\mathfrak{s}\left(C^\prime\right)\] and the topological data associated to a $1-$form reaching the above lower bound.
\end{theorem*}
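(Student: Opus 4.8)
The plan is to translate the analytic minimization defining $\mathfrak{s}$ into a finite combinatorial optimization on the dual tree of the minimal resolution, and then to show that this optimization ranges exactly over the $2^b$ possible \emph{dicritical configurations} of an adapted foliation. Recall that a form $\omega\in\Omega^1(\log C')$ is, up to multiplication by the equation of $C'$, the defining form of a singular foliation $\mathcal{F}_\omega$ leaving $C'$ invariant, and that $\nu(\omega)$ is read off from the multiplicity of $\mathcal{F}_\omega$ at the origin. First I would record the behaviour of this multiplicity under a single punctual blowing-up: if the created exceptional divisor is left invariant by the strict transform of $\mathcal{F}_\omega$ it contributes a definite increment to $\nu(\omega)$, whereas if it is \emph{dicritical} (transverse to the foliation) it contributes nothing; the radial foliation $x\,\dd y-y\,\dd x$ leaving invariant an arbitrary union of lines is the model example showing that dicriticalness is what drives the valuation down. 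Iterating this along the $b$ punctual blowing-ups of the minimal desingularization expresses $\nu(\omega)$ as a sum of local increments indexed by the exceptional components, the increment at a component being switched off precisely when that component is dicritical for $\mathcal{F}_\omega$.

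Next I would encode a choice of adapted foliation by the subset $S\subseteq\{1,\dots,b\}$ of exceptional components declared dicritical, and extract from the previous step a closed formula $\mathfrak{s}_S$ for the resulting valuation in terms of the multiplicities carried by the branches of $C'$ along the tree and of $S$. Not every subset is geometrically meaningful: a component can be made dicritical only if the local model of a reduced foliation along it is compatible with the incidences recorded in the tree (valence, number of branches of $C'$ passing through it, and the index relations constraining which components may be simultaneously dicritical). I would isolate these as a set of \emph{admissibility} conditions on $S$, each testable by inspecting the tree, and observe that the only topological datum of the optimal form one needs to output is the admissible set $S^\star$ realizing the minimum.

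The heart of the argument, and the step I expect to be the main obstacle, is to prove that the minimization over the whole equisingularity class $\textup{Top}(C)$ collapses to the finite minimization $\min_{S\ \mathrm{admissible}}\mathfrak{s}_S$. This has two halves. For the lower bound I would show that for \emph{every} $C'\in\textup{Top}(C)$ and every $\omega\in\Omega^1(\log C')$ the dicritical configuration of $\mathcal{F}_\omega$ is an admissible $S$, so that $\nu(\omega)\ge\mathfrak{s}_S\ge\min_S\mathfrak{s}_S$; this rests on the Seidenberg reduction together with the fact that the combinatorial tree is a topological invariant, hence fixed throughout $\textup{Top}(C)$. For the realization I would, given an admissible $S$, construct an explicit curve $C'\in\textup{Top}(C)$ together with a logarithmic form whose associated foliation has exactly $S$ as dicritical set and valuation $\mathfrak{s}_S$; the delicate point is that the analytic type of $C'$ must be tuned so that the prescribed components become genuinely dicritical while the equisingularity type is preserved, which requires controlling the moduli of $\textup{Top}(C)$ and invoking the semicontinuity of $\mathfrak{s}$ along the equisingular family to guarantee the constructed value is actually minimal.

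Finally the algorithm and its complexity are immediate: enumerate the $2^b$ subsets $S$ of punctual blowing-ups, discard the non-admissible ones and evaluate $\mathfrak{s}_S$ on the survivors, keeping the running minimum and the corresponding $S^\star$. Since the admissibility test and the evaluation of $\mathfrak{s}_S$ each cost only a traversal of a fixed tree — and may be updated incrementally when $S$ is varied in Gray-code order so that the per-subset cost is absorbed — the total running time is $O(2^b)$, and the returned pair $\bigl(\min_S\mathfrak{s}_S,\,S^\star\bigr)$ is precisely the minimal Saito number over $\textup{Top}(C)$ together with the topological data of a form attaining it.
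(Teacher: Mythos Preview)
Your high-level strategy---reduce to a finite optimization over the $2^b$ choices of dicritical/non-dicritical status for each exceptional component, then enumerate---is exactly the paper's. But two of your steps are not correctly worked out, and the second is a genuine gap.

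First, the valuation formula is not the binary switch you describe. The paper uses Hertling's identity $\nu(\mathcal{F}) = -1 + \sum_s \rho(s)\, s(2)$: for a dicritical (black) component $s$ the minimal value of $s(2)$ is $2-\textup{val}_w(s)$, which is negative, zero, or positive according to how many non-dicritical neighbours $s$ has; for a non-dicritical (white) component the minimal $s(2)$ is $n_s(C)$, with a forced $+1$ on one vertex of each connected white block on which all $n_s(C)$ vanish (this is the paper's condition~(c), coming from negative-definiteness of the intersection form, and it is what plays the role of your ``admissibility''). So making a component dicritical does not ``switch off'' its contribution---it replaces it by a different term whose sign depends on the colours of its neighbours. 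Without these exact expressions your $\mathfrak{s}_S$ is undefined and the inequality $\nu(\omega)\ge\mathfrak{s}_S$ has no content.

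Second, your realization step does not work as stated. You propose to tune the analytic type of $C'$ inside the moduli of $\textup{Top}(C)$ and appeal to semicontinuity of $\mathfrak{s}$; the paper does neither, and it is unclear how semicontinuity would help produce a curve with \emph{prescribed} Saito number. What the paper does instead is the missing idea: given a coloration and minimal indices satisfying (a)--(c), it builds explicit local model foliations on each component (logarithmic forms $\dd x_1/x_1+\sum_i\lambda_i\,\dd y_1/(y_1-i)$ at white vertices, transverse forms $\dd x_1+\prod_i(y_1-i)\,\dd y_1$ at black ones), glues them via Lins~Neto's construction subject only to the Camacho--Sad relations, and then invokes Grauert's contraction theorem to identify the resulting divisor neighbourhood with the exceptional divisor of some process $E'$. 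The curve $C'$ is obtained afterwards as a union of separatrices of the contracted foliation. This gluing-and-contracting construction is what guarantees that every admissible configuration is realized on the nose, with no deformation argument needed.
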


The equisingularity class $\textup{Top}(C)$ of $C$ is naturally stratified by the Saito number of its curves. In the second section, we prove that, in the one Puiseux pair case, as well as in some non-irreducible situations, the Saito numbers reach any value between its minimum and maximum value inside a given equisingularity class. As an example, we prove the following result

\begin{theorem*}\label{THEO2}
    Let $C$ be the curve given by the equation \[y^{N\nu_0}-x^{N\nu_1}=0\] where $1\leq\nu_0<\nu_1$ are coprime integers and $N\nu_0>1$. Then for any integer $k$ such that 
    \[\min_{C^\prime \in  \textup{Top}(C)}\mathfrak{s}\left(C^\prime\right)=1\leq k\leq \left\lfloor\frac{N\nu_0}{2}\right\rfloor=\max_{C^\prime \in  \textup{Top}(C)}\mathfrak{s}\left(C^\prime\right)\] there exists $C_k$ equisingular to $C$ such that 
    \[\mathfrak{s}\left(C_k\right)=k.\]
\end{theorem*}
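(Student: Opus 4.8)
The plan is to read $\mathfrak{s}(C)$ off the module $M(C)$ of germs of holomorphic $1$-forms $\eta$ leaving $C=\{f=0\}$ invariant, i.e. $df\wedge\eta\in(f)\,\Omega^2$. Writing $\omega=\eta/f$ identifies $\Omega^1(\log C)$ with $M(C)$, so Saito's theorem makes $M(C)$ free of rank $2$, and under this identification $\mathfrak{s}(C)$ is the least vanishing order attained on $M(C)$, equivalently the minimal valuation $\nu(\eta_1)$ occurring in a Saito basis $(\eta_1,\eta_2)$. First I would record the uniform bounds that fix the interval. The multiplicity $\nu(f)=N\nu_0$ is a topological invariant, hence the same for every $C'\in\textup{Top}(C)$; and any Saito basis satisfies Saito's relation $\eta_1\wedge\eta_2=u\,f\,dx\wedge dy$ with $u$ a unit. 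Taking valuations gives $\nu(\eta_1)+\nu(\eta_2)\le N\nu_0$, so $\mathfrak{s}(C')=\nu(\eta_1)\le\lfloor N\nu_0/2\rfloor$, while $\mathfrak{s}(C')\ge 1$ is automatic. Thus the stated interval is \emph{a priori} the envelope for the whole class, and the real content is surjectivity of $C'\mapsto\mathfrak{s}(C')$ onto its integer points.

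Next I would settle the endpoints and organise the interpolation around the quasi-homogeneous model. Giving $x,y$ the weights $\nu_0,\nu_1$ makes $f_0=y^{N\nu_0}-x^{N\nu_1}$ weighted homogeneous of weighted degree $d=N\nu_0\nu_1$; contracting $dx\wedge dy$ by the quasi-radial field $R=\nu_0 x\,\partial_x+\nu_1 y\,\partial_y$ yields $\omega_R=\nu_0 x\,dy-\nu_1 y\,dx$, and a direct check gives $df_0\wedge\omega_R=d\,f_0\,dx\wedge dy$. By Saito's criterion $(\omega_R,df_0)$ is then a basis of $M(C)$ with valuations $(1,N\nu_0-1)$; in particular $\mathfrak{s}(C)=1$, which realizes $k=1$. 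The opposite endpoint $k=\lfloor N\nu_0/2\rfloor$ should fall out of a sufficiently generic member of the class, for which the minimal Saito basis is forced to be balanced, $\nu(\eta_1)=\nu(\eta_2)$ up to a parity correction. For the intermediate values I would deform $C$ inside $\textup{Top}(C)$ by weighted-homogeneous corrections of weighted degree strictly above $d$, say $f_s=f_0+\sum_{\delta>0}s_\delta\,h_\delta$, taken inside the equisingular stratum so that each branch keeps the characteristic $(\nu_0,\nu_1)$ and the mutual contacts are preserved; fixing the weighted-homogeneous leading part $f_0$ guarantees this.

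The computation of $\mathfrak{s}(C_s)$ I would carry out at the weighted-graded level. Filtering $M(C_s)$ by weighted degree, the leading equation is the one for $f_0$, whose solution space is $\mathcal{O}\,\omega_R+\mathcal{O}\,df_0$; the obstruction to lifting a low-valuation candidate to an honest element of $M(C_s)$ is measured by the pairing of the lowest active term $h_\delta$ against the cokernel of $\eta\mapsto df_0\wedge\eta \bmod f_0$, a finite-dimensional graded space built from the Milnor/Tjurina algebra of $f_0$. The key point to establish is that, as the weighted degree of the leading deformation term varies, the first obstructed valuation moves by single units and sweeps the whole range, so that for a suitable choice the minimal invariant form is forced to have any prescribed valuation $k$ with $1\le k\le\lfloor N\nu_0/2\rfloor$. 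Exhibiting, for each such $k$, one explicit corrected form of valuation $k$ gives the upper bound $\mathfrak{s}(C_k)\le k$, and the graded obstruction count supplies the matching lower bound.

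The main obstacle is exactly this matching: proving that \emph{no} invariant form of valuation below $k$ survives the chosen deformation, i.e. controlling precisely where the first nonzero obstruction sits as a function of the parameters $s_\delta$. This is a cohomological computation on $f_0$ — the kernel and cokernel of cup product with $df_0$ modulo $f_0$ in each weighted degree — together with the combinatorial bookkeeping that the induced jumps of $\mathfrak{s}$ are by single units and cover the full interval. A secondary, more routine point is to check that the deformations stay inside $\textup{Top}(C)$; for $N>1$ this amounts to seeing that prescribing higher weighted-degree terms while fixing the weighted-homogeneous leading part preserves the equisingularity type of the $N$-branch configuration, which follows from the constancy of the weighted tangent cone and of the pairwise intersection multiplicities.
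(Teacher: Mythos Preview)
Your proposal is a strategy outline, not a proof: you correctly identify that the crux is the lower bound---showing that for the deformed curve \emph{no} invariant form of valuation below $k$ exists---and you explicitly leave this open (``The main obstacle is exactly this matching\ldots''). Nothing in the sketch makes this obstruction computation work. In particular, there is no reason the first nonzero obstruction should move by single units as you vary the weighted degree of the leading correction $h_\delta$, nor that the resulting Saito numbers exhaust the interval; the weighted Tjurina-type cokernel you describe is the right object to look at, but its graded pieces have no simple monotone behaviour in $\delta$, and for $N>1$ the model $f_0$ is not even an isolated singularity in the usual sense (it is non-reduced in the weighted-projective sense, with $N$ branches), so the Milnor/Tjurina formalism you invoke does not apply directly. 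Your upper-bound side is also incomplete: you assert that ``exhibiting one explicit corrected form of valuation $k$'' is possible, but no such form is produced.

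The paper avoids the obstruction problem entirely and proceeds by a different, constructive mechanism. For $N=1$ it fixes, for each $k$, a specific dicritical $1$-form $\omega=\nu_1 x^{k-1}(\nu_0 x\,dy-\nu_1 y\,dx)+\nu_0(\gamma-\nu_1)y^{\nu_0-k}dy$ and finds an invariant branch $C$ with a controlled parametrization; the lower bound $\mathfrak{s}(C)\ge k$ is then obtained not by obstruction theory but by computing the $\Gamma_C$-semimodule $\Lambda_C$ (via the Hefez--Hernandes algorithm) and invoking the Cano--Corral--Senovilla-Sanz characterisation of Saito bases through divisorial valuations. For $N\ge 2$ the argument is an induction on $N$: starting from a Saito basis $\{\omega,\eta\}$ of a curve in $\textup{Top}(C_{N-2,\nu_0,\nu_1})$ (or $C_{1,\nu_0,\nu_1}$, or $C_{2,\nu_0,\nu_1}$, depending on the range of $k$) with one of the forms dicritical along the central component, one multiplies by equations $f_i$ of extra invariant branches attached to that component to get a Saito basis of a curve in $\textup{Top}(C_{N,\nu_0,\nu_1})$; Saito's criterion is preserved under this multiplication and the valuations shift by exactly $\nu_0$ per added branch, which is what gives the unit-step control you were seeking. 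None of this involves deforming $f_0$ or computing cohomological obstructions.
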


We conjecture that the result above holds for any topological class
of plane curve.

\section{Minimal Saito number in $\textup{Top}(C)$.}  

In this section, we prove Theorem \ref{THEO1} by identifying a foliation leaving invariant a curve $C^\prime$ equisingular to $C$ and reaching the minimal Saito number.


\subsection{Dual graph of foliations.}

Let $E$ be any process of blowing-ups. The dual graph of $E$ can
be numbered by the multiplicities of its irreducible components, defined
as follows : the multiplicity $\rho\left(s\right)$ of the initial
component $s$ is $1$ and inductively, if $s$ results from the blowing-up
of a point $p\in s^{\prime}$ and $p\not\in s^{\prime\prime}$ for any other component $s^{\prime\prime}\neq s$, that is $p$ is \emph{a free point}, then $\rho\left(s\right)=\rho\left(s^{\prime}\right)$ ; if it results from the blowing-up of the point $p\in s^{\prime}\cap s^{\prime\prime}$, that is $p$ is \emph{a satellite point}, then $\rho\left(s\right)=\rho\left(s^{\prime}\right)+\rho\left(s^{\prime\prime}\right)$.
Following the definitions of \cite{hertlingfor}, for a foliation
$\mathcal{F}$, a regular curve $C$ and a point $p$ in $C$ we consider
the following indeces : the foliation being given
by the $1-$form $a\textup{d}x+b\textup{d}y\in\Omega^1$, where $C=\left\{ y=0\right\} ,$
we set
\begin{itemize}
\item if $C$ is invariant by $\mathcal{F}$, $\textup{Ind}\left(\mathcal{F},C,p\right)=\nu\left(b\left(x,0\right)\right)$
\item if $C$ is not invariant by $\mathcal{F}$,
$\textup{Tan}\left(\mathcal{F},C,p\right)=\nu\left(a\left(x,0\right)\right)$.
\end{itemize}
Notice that if $\mathcal{F}$ is singular at $p$, in any case, both
indeces above are strictly positive.\\ 
A \emph{double numbered colored graph $\mathbb{A}$ }is a graph where
the vertices are colored in black or white and are double numbered.
For any vertex $s$ of $\mathbb{A}$ and $\epsilon\in\left\{ 1,2\right\} $,
$s\left(\epsilon\right)$ stands for the $\epsilon^{\textup{th}}$
index of $s.$\\
The multiplicity of a double numbered colored graph $\mathbb{A}$
is defined by 
\[
\nu\left(\mathbb{A}\right)=-1+\sum_{s\in\mathbb{A}}s\left(1\right)s\left(2\right).
\]

\begin{definition}
\label{def:The-dual-graph}The dual graph of $\mathcal{F}$ with respect
to $E$ is the double numbered colored graph $\mathbb{A}\left[\mathcal{F},E\right]$
defined as follows:
\begin{itemize}
\item The graph of $\mathbb{A}\left[\mathcal{F},E\right]$ is the dual graph
of $E.$ 
\item If $E^{\star}\mathcal{F}$ is generically transverse to a component,
the corresponding vertex is black. If not, the vertex is white.
\item Each vertex $s$ is double numbered by 

\begin{itemize}
\item $\left(\rho\left(s\right),-\textup{val}_{w}\left(s\right)+\sum_{p\in s}\textup{Ind}\left(E^{\star}\mathcal{F},s,p\right)\right)$
if the vertex is white;
\item $\left(\rho\left(s\right),2-\textup{val}_{w}\left(s\right)+\sum_{p\in s}\textup{Tan}\left(E^{\star}\mathcal{F},s,p\right)\right)$
if the vertex is black
\end{itemize}
where $\textup{val}_{w}\left(s\right)$ stands for the number of white
vertices attached to $s.$ 
\end{itemize}
\end{definition}

In this formalism, the next property is just Theorem 3 in \cite{hertlingfor}.
\begin{proposition}[\cite{hertlingfor}]\label{Hertling}
For any foliation $\mathcal{F}$ and any blowing-up process $E$, we get
\[
\nu\left(\mathcal{F}\right)=\nu\left(\mathbb{A}\left[\mathcal{F},E\right]\right).
\]
\end{proposition}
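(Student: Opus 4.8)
The plan is to prove the identity by induction on the number of blowing-ups composing $E$, exploiting the fact that the left-hand side $\nu(\mathcal{F})$ depends only on the germ $\mathcal{F}$ and not at all on the chosen process $E$. Since every resolution process over the origin begins by blowing up $0$, it is enough to establish two statements: first, that the formula holds when $E$ is the single blowing-up of the origin; and second, that the quantity $\nu(\mathbb{A}[\mathcal{F},E])$ is unchanged when one refines $E$ by one further blowing-up at a point $p$ of the current exceptional divisor. Combining these, $\nu(\mathbb{A}[\mathcal{F},E])$ is independent of $E$ and equals the value computed from a single blowing-up, which the base case identifies with $\nu(\mathcal{F})$.

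For the \emph{base case}, I would write the $1$-form defining $\mathcal{F}$ as $\omega=\omega_\nu+\textup{h.o.t.}$ with $\omega_\nu=a_\nu\,\dd x+b_\nu\,\dd y$ homogeneous of degree $\nu=\nu(\mathcal{F})$, blow up the origin, and decide whether the exceptional divisor is dicritical (black) or not (white) by the standard criterion that it is dicritical precisely when $x a_\nu+y b_\nu\equiv 0$. The resulting graph has a single vertex $s_0$ with $\rho(s_0)=1$, and the whole content is that its second index equals $\nu+1$ in both cases, so that $\nu(\mathbb{A})=-1+\rho(s_0)\,s_0(2)=\nu$. In the non-dicritical case this follows because $\sum_{p}\textup{Ind}(E^\star\mathcal{F},s_0,p)$ counts, with multiplicity, the zeros on $\mathbb{P}^1$ of the degree-$(\nu+1)$ form $x a_\nu+y b_\nu$, hence equals $\nu+1$. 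In the dicritical case one has $s_0(2)=2+\sum_p\textup{Tan}(E^\star\mathcal{F},s_0,p)$, where the summand is the degree of the tangency divisor of the restricted foliation on $\mathbb{P}^1$, equal to $\nu-1$ by a Riemann–Hurwitz count (the reduction from the naive degree $\nu$ to $\nu-1$ being due to the behavior of $\dd t$ at the point at infinity), while the constant $2=\chi(\mathbb{P}^1)$ restores the value $\nu+1$. This parallelism between the two vertex-colouring conventions is exactly what makes the two cases agree.

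For the \emph{inductive step}, I would localise everything at the blown-up point $p$: the new blowing-up $\sigma$ is a single blowing-up of the germ $\mathcal{G}=E^\star\mathcal{F}$, of some foliation-multiplicity $m$ at $p$, except that now $p$ lies on one existing component $s'$ (free point) or on two components $s',s''$ (satellite point), with $\rho$ of the new vertex $s$ equal to $\rho(s')$ or $\rho(s')+\rho(s'')$ respectively. The net change in $\sum_t\rho(t)\,t(2)$ has three sources: the new term $\rho(s)\,s(2)$, governed by the base-case count on $\sigma^{-1}(p)\cong\mathbb{P}^1$; the replacement on each neighbour of the index/tangency at $p$ by the one at the new intersection point $q'=\sigma^{-1}(p)\cap s'$ (and $q''$); and the jump by $1$ in $\textup{val}_w(s')$ (and $\textup{val}_w(s'')$) whenever the new component is white. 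I would then verify that these contributions cancel, i.e. $\Delta=0$. The verification reduces to a local conservation law for the indices under blowing-up — of the shape $\textup{Ind}(\mathcal{G},s',p)=\textup{Ind}(\sigma^\star\mathcal{G},s',q')+m-1$ along an invariant component, with the analogous statement for $\textup{Tan}$ along a non-invariant component — combined with the base-case count $\sum_{q}\textup{Ind}=m+1$ (resp. $2+\sum_q\textup{Tan}$) on the new $\mathbb{P}^1$ and the additivity of $\rho$ at satellite points.

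The hard part will be precisely this bookkeeping in the inductive step: one must check the exact cancellation across all the combinations free/satellite $\times$ white/black for the new divisor, further subdivided according to the colours of the old components through $p$, and establish the index transformation rules in each. To avoid case proliferation I would, in parallel, try to make rigorous a more structural reformulation: since the multiplicities $\rho(s)$ are exactly the coefficients in the total transform $E^\star L=\widetilde{L}+\sum_s\rho(s)\,D_s$ of a generic line $L$ through the origin, the sum $\sum_s\rho(s)\,s(2)$ should be read as the total tangency of $\mathcal{F}$ with $E^\star L$, and the identity should then drop out in one stroke from the projection formula together with Brunella-type tangency and index formulas — in which the constant $2$ appears as $\chi(\mathbb{P}^1)$ and the correction $-\textup{val}_w(s)$ accounts precisely for the nodal intersection points of the exceptional divisor. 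I expect this viewpoint to be the cleanest route, with the inductive argument serving as the concrete fallback.
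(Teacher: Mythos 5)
You should first be aware that the paper offers no proof of this proposition at all: the text introduces it with ``the next property is just Theorem 3 in \cite{hertlingfor}'', so it is imported verbatim from Hertling's article, and your attempt can only be compared with Hertling's own argument, not with anything in this paper. Your plan is essentially a reconstruction of that argument, and it is sound. The induction scheme --- base case of a single blowing-up plus invariance of $\nu\left(\mathbb{A}\left[\mathcal{F},E\right]\right)$ under one further blowing-up, using that $\nu\left(\mathcal{F}\right)$ does not depend on $E$ --- is the right one, and your base-case counts are correct: writing $\omega_{\nu}=a_{\nu}\textup{d}x+b_{\nu}\textup{d}y$, in the white case the indices along the exceptional line sum to the number of zeros on $\mathbb{P}^{1}$ of the degree-$(\nu+1)$ form $xa_{\nu}+yb_{\nu}$, and in the black case $a_{\nu}=-yc$, $b_{\nu}=xc$ with $\deg c=\nu-1$ gives tangency sum $\nu-1$, compensated by the constant $2$. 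Your conservation law $\textup{Ind}\left(\mathcal{G},s^{\prime},p\right)=\textup{Ind}\left(\sigma^{\star}\mathcal{G},\tilde{s}^{\prime},q^{\prime}\right)+m-1$ is exactly what the computation in the chart $y=tx$ yields when the new component is white; when it is black the drop is $m$ (and for $\textup{Tan}$ the drops are $m$ and $m+1$ respectively), which is precisely the case-splitting you flag. The cancellation you defer does hold in every configuration; for instance, for a free point on a white $s^{\prime}$ with white new vertex $s$, the new vertex contributes $\rho\left(s^{\prime}\right)m$ since $s\left(2\right)=-1+(m+1)=m$, while $s^{\prime}\left(2\right)$ loses $1$ from $\textup{val}_{w}$ and $m-1$ from the index at $p$, a net $-m$, so the total change vanishes; the satellite cases work the same way using the additivity $\rho\left(s\right)=\rho\left(s^{\prime}\right)+\rho\left(s^{\prime\prime}\right)$. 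So no step of your outline would fail; what separates it from a complete proof is only that the finitely many case verifications --- which are the entire content of Hertling's theorem --- are announced rather than carried out, and that your alternative ``structural'' route via $E^{\star}L$ and Brunella-type formulas remains a heuristic rather than an argument.
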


\begin{example}\label{first.example}
Let $\mathcal{F}$ be the foliation defined by the
$1-$form
\[
\left(2x^{2}+\frac{5}{2}y^{3}-\frac{9}{2}x^{3}y\right)\textup{d}y-\left(3xy-3x^{2}y^{2}\right)\textup{d}x.
\]
It leaves invariant the curve $C=\left\{ \left(y^{2}-x^{3}\right)\left(y^{3}-x^{2}\right)=0\right\} $
and it is of multiplicity $\nu\left(\mathcal{F}\right)$ is equal to $2$. Now, if $E$ refers to the minimal
process of reduction of $C$ then the graph $\mathbb{A}\left[\mathcal{F},E\right]$
is in Figure 1. In particular, its valuation satisfies
\begin{align*}
\nu\left(\mathbb{A}\left[\mathcal{F},E\right]\right) & =-1+1\times1+2\times0+1\times1+2\times0+1\times1\\
 & =2=\nu\left(\mathcal{F}\right).
\end{align*}
\begin{center}
\begin{figure}
\begin{centering}
\includegraphics[width=6cm]{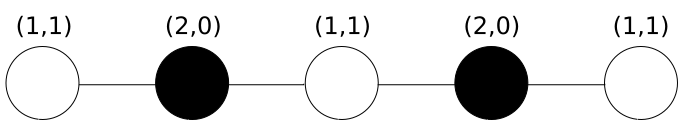}
\par\end{centering}
\caption{Dual graph of $\mathcal{F}$ with respect to the desingularization
of $C.$}
\end{figure}
\end{center}

\end{example}

\subsection{Construction of foliations with prescribed dual graphs.}
Using a method to construct germ of singular foliations in the complex plane introduced by A. Lins Neto \cite{alcides}, we can produce foliations with prescribed double numbered dual graphs. 

\begin{proposition}
\label{prop:constr}Let $\mathbb{A}$ be the dual graph of a process
of blowing-ups $E$. Consider a coloration of $\mathbb{A}$ and for any $s\in\mathbb{A}$ a double numbering $(s(1),s(2))$ such that $s(1)$ is the multiplicity $\rho\left(s\right)$. There exists a foliation $\mathcal{F}$
and a process of blowing-ups $E^{\prime}$ with the same dual graph
as $E$ such that 
\[
\mathbb{A}\left[\mathcal{F},E^{\prime}\right]=\mathbb{A}
\]
 if and only the following properties are satisfied
\begin{enumerate}
\item[a)] for any white vertex $s$, $s\left(2\right)+\textup{val}_{w}\left(s\right)\geq0;$
\item[b)] for any black vertex $s$, $s\left(2\right)+\textup{val}_{w}\left(s\right)\geq2;$
\item[c)] for any connected component $\mathbb{K}$ of $\mathring{\mathbb{A}}$
which is $\mathbb{A}$ minus the black vertices, there exists $s\in\mathbb{K}$
such that $s\left(2\right)>0.$ 
\end{enumerate}
\end{proposition}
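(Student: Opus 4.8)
The plan is to establish the two implications separately. Necessity of \textup{a)}--\textup{c)} will follow by reading off the defining formulas of Definition \ref{def:The-dual-graph} together with one index-theoretic observation, while sufficiency will be obtained by feeding suitable local models into the construction of Lins Neto \cite{alcides}.

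For the necessity, assume $\mathbb{A}=\mathbb{A}\left[\mathcal{F},E^{\prime}\right]$ for some foliation $\mathcal{F}$ and some process $E^{\prime}$. Both $\textup{Ind}$ and $\textup{Tan}$ are valuations, hence non-negative, so the formula for the second numbering gives, for a white vertex, $s\left(2\right)+\textup{val}_{w}\left(s\right)=\sum_{p\in s}\textup{Ind}\left(\left(E^{\prime}\right)^{\star}\mathcal{F},s,p\right)\geq 0$, which is \textup{a)}, and for a black vertex $s\left(2\right)+\textup{val}_{w}\left(s\right)=2+\sum_{p\in s}\textup{Tan}\left(\left(E^{\prime}\right)^{\star}\mathcal{F},s,p\right)\geq 2$, which is \textup{b)}. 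For \textup{c)}, I would use that, when $\left\{y=0\right\}$ is invariant for $a\textup{d}x+b\textup{d}y$, invariance forces $a\left(x,0\right)\equiv 0$, so that $\textup{Ind}$ is exactly the vanishing order along the invariant component of the vector field tangent to the foliation restricted to that component. Summing over a maximal invariant block $\mathbb{K}$ of $\mathring{\mathbb{A}}$, and using that the dual graph is a tree so that $\sum_{s\in\mathbb{K}}\textup{val}_{w}\left(s\right)=2\left(\left|\mathbb{K}\right|-1\right)$, one sees that $\sum_{s\in\mathbb{K}}s\left(2\right)$ measures the leafwise vanishing of that field beyond what the simple corners already account for. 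A Poincar\'e--Hopf argument then forces this quantity to be positive: if it vanished, the restricted field would be singular only at reduced corners, a neighborhood of $D_{\mathbb{K}}$ would admit a holomorphic first integral, and the block would be dicritical, contradicting that all its vertices are white.

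For the sufficiency, $s\left(2\right)$ is read as (a normalization of) the degree along the component $D_{s}$ of the tangent bundle of the foliation to be built. On a white component $D_{s}\cong\mathbb{P}^{1}$ one must exhibit a leafwise field whose zeros, placed at the $\textup{val}_{w}\left(s\right)$ corners, at the attaching points and at finitely many interior points, have orders summing to $s\left(2\right)+\textup{val}_{w}\left(s\right)$; by \textup{a)} this is a non-negative integer, so such a section exists. On a black component one must instead exhibit a foliation of $\mathbb{P}^{1}$ generically transverse to $D_{s}$ whose tangency divisor has degree $s\left(2\right)+\textup{val}_{w}\left(s\right)-2$, the subtracted $2$ reflecting $\chi\left(\mathbb{P}^{1}\right)$; by \textup{b)} this degree is non-negative, which is exactly the condition for such a transverse model to exist. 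I would then invoke the patching method of \cite{alcides} to glue these local models along the tree into a single germ of foliation $\mathcal{F}$ whose reduction $E^{\prime}$ shares the dual graph of $E$, and check directly that $\mathbb{A}\left[\mathcal{F},E^{\prime}\right]=\mathbb{A}$.

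The main obstacle is the compatibility of the local models at the corners: at the crossing of two invariant components the Camacho--Sad index along one branch determines its counterpart along the other, so the zero orders cannot be chosen freely on each $\mathbb{P}^{1}$ but must be distributed coherently over the whole tree, in such a way that every corner carries a genuine reduced singularity and no extra blowing-up is forced. Guaranteeing that the assembled foliation admits $E^{\prime}$---and not a strictly larger resolution---as its reduction, with exactly the prescribed indices, is where the real work lies, and it is precisely here that \textup{c)} intervenes: it ensures that each maximal invariant block receives at least one honest tangency, so that the corresponding components survive as non-dicritical (white) vertices of the reduction. Since Lins Neto's construction is designed to solve exactly this global matching problem, granting it reduces the verification to the bookkeeping of indices encoded in the numbering of $\mathbb{A}$.
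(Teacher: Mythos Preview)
Your overall architecture---necessity from the defining formulas plus an index argument, sufficiency via Lins Neto's gluing---matches the paper's, but two steps are genuinely off.

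\textbf{Necessity of \textup{c)}.} Your conclusion ``a neighborhood of $D_{\mathbb{K}}$ would admit a holomorphic first integral, and the block would be dicritical'' is backwards: a first integral forces the divisor to be a union of level curves, hence \emph{invariant}, which is exactly the white (non-dicritical) situation and contradicts nothing. The paper's argument is different and sharper: if $s(2)=0$ for every $s\in\mathbb{K}$, then $\sum_{p\in s}\textup{Ind}=\textup{val}_w(s)$ for each $s$, so the only singularities on $\mathbb{K}$ lie at the corners $s\cap s'$ and each has index exactly $1$. This configuration is then ruled out by the negative-definiteness of the intersection matrix (equivalently, by the Camacho--Sad relations), via the cited result \cite[Proposition 3.1]{MR4310291}. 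Your Poincar\'e--Hopf sentence does not supply this obstruction.

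\textbf{Sufficiency and the role of \textup{c)}.} You describe \textup{c)} as guaranteeing that ``each maximal invariant block receives at least one honest tangency, so that the corresponding components survive as non-dicritical.'' But in the construction the colour is \emph{prescribed}: whiteness is a choice, not something to be rescued. The actual obstruction is the one you name but do not resolve: at each white--white corner the two Camacho--Sad indices must satisfy $\textup{CS}\cdot\textup{CS}=1$, and on each component the indices must sum to minus the self-intersection. The paper makes this precise by writing the white local model as $\frac{\textup{d}x_1}{x_1}+\sum_i\lambda_i^s\frac{\textup{d}y_1}{y_1-i}$ with $\sum_i\lambda_i^s$ equal to the self-intersection, and then counting: over a component $\mathbb{K}$ the number of equations is $\sharp\textup{vertices}(\mathbb{K})+\sharp\textup{edges}(\mathbb{K})=2\,\sharp\textup{vertices}(\mathbb{K})-1$, while the number of free $\lambda$'s is $\sum_{s\in\mathbb{K}}\bigl(s(2)+\textup{val}_w(s)\bigr)$, and \textup{c)} is exactly what makes the latter at least the former. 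Saying that Lins Neto's method ``is designed to solve exactly this global matching problem'' skips this count, which is the substance of the proof. Finally, once the local models are glued over a divisor with the correct intersection matrix, one still needs Grauert's contractibility theorem to identify that neighborhood with the exceptional divisor of an actual blowing-up process $E'$; this step is absent from your outline.
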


\begin{proof}
The direct part of properties $\left(a\right)$ and $\left(b\right)$
follows from Definition \ref{def:The-dual-graph}. The direct part
of the property $\left(c\right)$ is a classical consequence of the
fact that the intersection matrix of $E$ is definite negative \cite{MR1742334}.
Indeed, if for any $s\in\mathbb{K}$, $s\left(2\right)=0$ then 
\[
\forall s\in\mathbb{K},\ \sum_{p\in s}\textup{Ind}\left(E^{\star}\mathcal{F},s,p\right)=\textup{val}_{w}\left(s\right).
\]
Since for any $s,\ s^{\prime}\in\mathbb{K}$, $s\cap s^{\prime}$
is a singular point of $E^{\star}\mathcal{F}$ for which the index
$\textup{Ind}\left(E^{\star}\mathcal{F},s,p\right)$ is bigger than
$1$, the above equalities ensures that along $\mathbb{K}$, the foliation
$E^{\star}\mathcal{F}$ is singular only at the points of type $s\cap s^{\prime}$,
$s,\ s^{\prime}\in\mathbb{K}$ with indeces equal to $1.$ The latter
is impossible according to \cite[Proposition 3.1]{MR4310291}.

Now, suppose that the properties $(a), (b)$ and $(c)$ are satisfied. The
statement relies upon a result of Lins Neto \cite{alcides} of construction
of singular foliations in dimension $2$ from elementary elements
which are reduced singularity. The only obstruction for such construction
is the well-known Camacho-Sad relation \cite{camacho}
\[
\sum_{p\in s}\textup{CS}\left(E^{\star}\mathcal{F},s,p\right)=-s\cdot s
\]
where $s\cdot s$ is the self intersection of the component $s$ and
$\textup{CS}\left(\cdot\right)$ stands for the Camacho-Sad index
of the foliation $E^{\star}\mathcal{F}$ with respect to $s$ at $p$.
More precisely, consider $s$ in $\mathbb{A}$ with $s\cdot s=-k.$
Locally around the irreducible component $s$, the total space of
$E$ is analytically equivalent to the neighborhood of $x_{1}=0$
in the following gluing model
\begin{equation}\label{Gluing}
\left(\mathbb{C}^{2},\left(x_{1},y_{1}\right)\right)\coprod\left(\mathbb{C}^{2},\left(x_{2},y_{2}\right)\right)
\end{equation}
with the identification 
\[
y_{2}=y_{1}^{k}x_{1}\quad x_{2}=\frac{1}{y_{1}}.
\]
This neighborhood can be foliated by the foliation $\mathcal{R}_{s}$
given in the coordinates $\left(x_{1},y_{1}\right)$ by the $1-$form
\[
\omega=\textup{d}x_{1}+\prod_{i=1}^{s\left(2\right)+\textup{val}_{w}\left(s\right)-2}\left(y_{1}-i\right)\textup{d}y_{1}
\]
which is completely transverse to $x_{1}=0$ except at the points
$\left(0,i\right),$ $i=1,\ldots,s\left(2\right)+\textup{val}_{w}\left(s\right)-2$
where it is tangent at order $1.$ The foliation $\mathcal{R}_{s}$
is a local model for a foliation with double numbered black graph
presented in Figure \ref{fig:Local-model-of}.
\begin{figure}
\begin{centering}
\includegraphics[width=6cm]{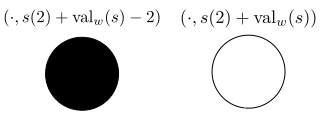}
\par\end{centering}
\caption{\label{fig:Local-model-of}Local models of elementary foliations for
black and white vertices.}
\end{figure}
The model (\ref{Gluing}) can also be foliated by $\mathcal{G}_{s}$
given by $1-$form
\begin{equation}
\eta=\frac{\textup{d}x_{1}}{x_{1}}+\sum_{i=1}^{s\left(2\right)+\textup{val}_{w}\left(s\right)}\lambda_{i}^{s}\frac{\textup{d}y_{1}}{y_{1}-i}.\label{eq:nondic}
\end{equation}
with 
\begin{equation}
\sum_{i=1}^{s\left(2\right)+\textup{val}_{w}\left(s\right)}\lambda_{i}^{s}=p.\label{eq:camacho}
\end{equation}
By construction, for any $i,$ one has 
\[
\textup{CS}\left(\mathcal{G}_{s},\left\{ x_{1}=0\right\} ,i\right)=\lambda_{i}^{s}.
\]
The foliation $\mathcal{G}_{s}$ is a local model for a foliation
with double numbered white graph presented in Figure \ref{fig:Local-model-of}. All these local models can be glued together
by gluing maps following the dual graph of $\mathbb{A}$. The property
$\left(c\right)$ ensures that, along any connected component $\mathbb{K}$
of the graph minus the black vertices, no incompatibility will occur
between the relations (\ref{eq:camacho}) and the fact that, at any
intersection point of two white components $s$ and $s^{\prime}$,
one should have
\begin{equation}
\textup{CS}\left(\mathcal{G}_{s},s,s\cap s^{\prime}\right)\cdot\textup{CS}\left(\mathcal{G}_{s^{\prime}},s^{\prime},s\cap s^{\prime}\right)=\lambda_{i_{s}}^{s}\cdot\lambda_{i_{s^{\prime}}}^{s^{\prime}}=1.\label{eq:CS}
\end{equation}
Indeed, the union of the relations (\ref{eq:camacho}) and (\ref{eq:CS})
yields a number of equations equal to 
\[
\sharp\textup{vertices}\left(\mathbb{K}\right)+\sharp\textup{edges}\left(\mathbb{K}\right).
\]
However, the number of variables involved in the mentioned relations
is 
\[
\sum_{s\in\mathbb{K}}s\left(2\right)+\textup{val}_{w}\left(s\right).
\]
Following property $\left(c\right)$ the above number of variables
satisfies
\[
\begin{array}{ll}
\sum_{s\in\mathbb{K}}s\left(2\right)+\textup{val}_{w}\left(s\right)\geq1+\sum_{s\in\mathbb{K}}\textup{val}_{w}\left(s\right) & =2\cdot\sharp\textup{vertices}\left(\mathbb{K}\right)-1 \\ & =\sharp\textup{vertices}\left(\mathbb{K}\right)+\sharp\textup{edges}\left(\mathbb{K}\right)
\end{array}\]
since $\sharp\textup{vertices}\left(\mathbb{K}\right)=\sharp\textup{edges}\left(\mathbb{K}\right)+1.$
Therefore the system of equations (\ref{eq:camacho}) and (\ref{eq:CS})
has always a solution - that can be chosen rational.

As a whole, the gluings lead to a foliation defined in a neighborhood of a divisor $\mathcal{D}$
with same intersection matrix as the one of the exceptional divisor
of $E.$ According to a classical result of H. Grauert \cite{grauerthans},
the neighborhood of $\mathcal{D}$ is analytically equivalent to the
neighborhood of the exceptional divisor of some blowing-up process
$E^{\prime}$ with same dual graph as $E.$ The latter neighborhood
is foliated by a foliation $\mathcal{F}^{\prime}$ that can be contracted
by $E^{\prime}$ in a foliation $\mathcal{F}.$ By construction, one
has 
\[
\mathbb{A}\left[\mathcal{F},E^{\prime}\right]=\mathbb{A}.
\]
\end{proof}

\subsection{Minimal Saito number of a given topological class of curve.}

Let $C$ be a germ of curve in the complex plane defined by $f\in\mathbb{C}\{x,y\}$. The \emph{Saito module} of $C$ is the $\mathbb{C}\{x,y\}$-module $$\Omega^1\left(\log C\right)=\{\omega\in\Omega^1:\ \omega\wedge\textup{d}f\in \langle f\rangle\}.$$
Saito, in \cite{MR586450}, showed that, in this context, $\Omega^1\left(\log C\right)$ is a free-module generated by a set of two elements $\{\omega_1,\omega_2\}$, called a \emph{Saito basis} for $C$, and it is characterized by a property we refer to as the \emph{Saito's criterion} : a set of two elements $\{\omega_1,\omega_2\}$ is a Saito basis if and only if for some unit $u\in\mathbb{C}\{x,y\}$, we get $$\omega_1\wedge\omega_2=uf\textup{d}x\wedge\textup{d}y.$$ The \emph{Saito number} of $C$ is by definition $$\mathfrak{s}(C)=\min\{\nu(\omega_1),\nu(\omega_2)\}.$$ It can be seen that $\mathfrak{s}(C)$ is also equal to $\min\{\nu(\mathcal{F}):\ C\ \mbox{is invariant by}\ \mathcal{F}\}$ and is an analytic invariant of $C$ \cite{YoyoBMS}. 

Let $\textup{Top}(C)$ be the topological (or equisingularity) class of $C$. In this section, as a consequence of the previous results, we present an algorithm to select a curve $C^{\prime}\in \textup{Top}(C)$ and a foliation $\mathcal{F}^{\prime}$ leaving invariant $C^{\prime}$ such that 
\[\nu(\mathcal{F}^{\prime})=\mathfrak{s}(C^{\prime})=\min_{C^{\prime\prime}\in \textup{Top}(C)}\mathfrak{s}(C^{\prime\prime}).\] 

Let $E$ be the
desingularization process of $C$ and $\mathbb{A}$ be the dual graph
of $E.$ The integer $n_{s}\left(C\right)$ stands for the number
of components of the strict transform of $C$ by $E$ attached to
the component $s$ of the exceptional divisor. 

Let $\mathcal{F}$ be a foliation whose $C$ is an invariant curve
and consider its dual graph $\mathbb{A}\left[\mathcal{F},E\right].$
Suppose that $s$ is a white vertex. Since $s$ is invariant by $E^{\star}\mathcal{F}$,
$E^{\star}\mathcal{F}$ is singular at any point $p$ of $s$ at which
is attached a component of the strict transform of $C.$ In particular,
the index 
$$\textup{Ind}\left(E^\star\mathcal{F},s,p\right)$$ is strictly positive. Therefore, for any white vertex $s,$
one has 
\[
s\left(2\right)\geq n_{s}\left(C\right).
\]
From $\mathbb{A}\left[\mathcal{F},E\right]$, we consider the double
numbered colored graph $\mathbb{A}$ which is a copy of $\mathbb{A}\left[\mathcal{F},E\right]$
except that we set 
\begin{itemize}
\item for any white vertex $s\left(2\right)=n_{s}\left(C\right)$
\item for any black vertex $s\left(2\right)=2-\textup{val}_{w}\left(s\right).$
\end{itemize}
If doing so, the property $\left(c\right)$ of Proposition \ref{prop:constr}
is not satisfied for some component $\mathbb{K}$, we set $s\left(2\right)=1$
for the vertex $s$ of $\mathbb{K}$ for which $\rho\left(s\right)$
is minimal. Applying Proposition \ref{prop:constr} yields a foliation
$\mathcal{F}^{\prime}$ that let invariant a curve $C^{\prime}$ which
is topologically equivalent to $C$. Indeed, for any component $s$,
either $s$ is black, $\mathcal{F}^{\prime}$ generically transverse
to $s$ and we choose arbitrarily $n_{S}\left(C\right)$ regular and
transverse invariant curves attached to $s$, or $s$ is white and $\mathcal{F}^{\prime}$
is locally given by (\ref{eq:nondic}) and leaves invariant $s\left(2\right)=n_{s}\left(C\right)$
regular and transverse curves still attached to $s.$ The union of
all these curves yields a curve $C^{\prime}$ whose desingularization
process has the dual graph of $E$. Since $n_{s}\left(C^{\prime}\right)=n_{s}\left(C\right)$,
then $C^{\prime}$ and $C$ are topologically equivalent \cite{zariskitop}. 
\begin{proposition}[Algorithm 1]
\label{prop:algo} We have
\[
\nu\left(\mathcal{F}^{\prime}\right)\leq\nu\left(\mathcal{F}\right).
\]
\end{proposition}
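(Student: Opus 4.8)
The plan is to translate both multiplicities into graph valuations via Proposition \ref{Hertling} and then to compare the two double numbered graphs. By Proposition \ref{Hertling} one has $\nu\left(\mathcal{F}\right)=\nu\left(\mathbb{A}\left[\mathcal{F},E\right]\right)$ and $\nu\left(\mathcal{F}^{\prime}\right)=\nu\left(\mathbb{A}\right)$. Since $\mathbb{A}$ is by construction a copy of $\mathbb{A}\left[\mathcal{F},E\right]$ with the same underlying graph, the same coloration, and the same first numbering $s\left(1\right)=\rho\left(s\right)$, the defining formula $\nu\left(\mathbb{A}\right)=-1+\sum_{s}s\left(1\right)s\left(2\right)$ shows that it suffices to establish
\[
\sum_{s}\rho\left(s\right)s\left(2\right)\leq\sum_{s}\rho\left(s\right)\tilde{s}\left(2\right),
\]
where $s\left(2\right)$ denotes the second numbering in $\mathbb{A}$ and $\tilde{s}\left(2\right)$ the one in $\mathbb{A}\left[\mathcal{F},E\right]$. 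As all multiplicities $\rho\left(s\right)$ are strictly positive, I would first try to prove this term by term.

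For a black vertex, the construction sets $s\left(2\right)=2-\textup{val}_{w}\left(s\right)$, whereas $\tilde{s}\left(2\right)=2-\textup{val}_{w}\left(s\right)+\sum_{p\in s}\textup{Tan}\left(E^{\star}\mathcal{F},s,p\right)$ with every tangency index nonnegative; hence $s\left(2\right)\leq\tilde{s}\left(2\right)$. For a white vertex, the construction sets $s\left(2\right)=n_{s}\left(C\right)$, and the discussion preceding the statement already gives $\tilde{s}\left(2\right)\geq n_{s}\left(C\right)$; hence again $s\left(2\right)\leq\tilde{s}\left(2\right)$. Thus, before the correction enforcing property $\left(c\right)$, the desired inequality holds term by term.

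The one place where a naive term-by-term comparison breaks down, and the main obstacle, is precisely the correction for property $\left(c\right)$ of Proposition \ref{prop:constr}: on a connected component $\mathbb{K}$ of $\mathring{\mathbb{A}}$ on which $n_{s}\left(C\right)=0$ for all (necessarily white) vertices, the construction raises the value at the vertex $s_{0}\in\mathbb{K}$ of minimal multiplicity from $n_{s_{0}}\left(C\right)=0$ to $1$, so that the new value $s_{0}\left(2\right)=1$ may strictly exceed the original second numbering at $s_{0}$. I would handle this by arguing at the level of the whole component rather than vertex by vertex. On such a $\mathbb{K}$ the new contribution is exactly $\sum_{s\in\mathbb{K}}\rho\left(s\right)s\left(2\right)=\rho\left(s_{0}\right)$, since every other white vertex keeps the value $0$. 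On the other hand, property $\left(c\right)$ is a necessary condition satisfied by the genuine foliation $\mathcal{F}$ — this is the direct part of Proposition \ref{prop:constr}, coming from the negative definiteness of the intersection matrix of $E$ — so there exists $s^{\star}\in\mathbb{K}$ with $\tilde{s}^{\star}\left(2\right)>0$, that is $\tilde{s}^{\star}\left(2\right)\geq1$. Since $s_{0}$ has minimal multiplicity in $\mathbb{K}$,
\[
\sum_{s\in\mathbb{K}}\rho\left(s\right)\tilde{s}\left(2\right)\geq\rho\left(s^{\star}\right)\tilde{s}^{\star}\left(2\right)\geq\rho\left(s^{\star}\right)\geq\rho\left(s_{0}\right)=\sum_{s\in\mathbb{K}}\rho\left(s\right)s\left(2\right),
\]
so the inequality survives the correction on $\mathbb{K}$.

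To conclude, I would sum the term-by-term estimates over the black vertices and over the white components on which no correction is made, together with the component-wise estimate over each corrected component $\mathbb{K}$. This yields $\sum_{s}\rho\left(s\right)s\left(2\right)\leq\sum_{s}\rho\left(s\right)\tilde{s}\left(2\right)$ and therefore $\nu\left(\mathcal{F}^{\prime}\right)=\nu\left(\mathbb{A}\right)\leq\nu\left(\mathbb{A}\left[\mathcal{F},E\right]\right)=\nu\left(\mathcal{F}\right)$, as claimed.
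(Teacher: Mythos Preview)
Your proof is correct and follows essentially the same approach as the paper: both reduce to the graph valuation formula via Proposition~\ref{Hertling}, handle black vertices using nonnegativity of the tangency indices, handle uncorrected white vertices via $\tilde{s}(2)\geq n_s(C)$, and treat each corrected component $\mathbb{K}$ globally by invoking the necessity of property~(c) in Proposition~\ref{prop:constr} together with the minimality of $\rho(s_0)$. The only step you leave implicit is that the inequality $\sum_{s\in\mathbb{K}}\rho(s)\tilde{s}(2)\geq\rho(s^{\star})\tilde{s}^{\star}(2)$ uses $\tilde{s}(2)\geq n_s(C)=0$ on $\mathbb{K}$, which you had already recorded earlier; the paper's write-up is equally terse on this point.
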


\begin{proof}
Let $\mathcal{K}_{1}$ the set of components $\mathbb{K}$ of $\mathring{\mathbb{A}}$
for which one has $n_{s}\left(C\right)=0$ for all $s\in\mathbb{K}$
and $\mathcal{K}_{2}$ the other set of components. If $\mathbb{K}\in\mathcal{K}_{1},$
we denote by $s_{\mathbb{K}}$ a vertex for which $s_{\mathbb{K}}\left(2\right)>0$,
that exists according to Proposition \ref{prop:constr}.

We have
\begin{align*}
\nu\left(\mathcal{F}\right) & =-1+\sum_{s\in\mathbb{A}\left[\mathcal{F},E\right]}s\left(1\right)s\left(2\right)\\
 & =-1+\sum_{s\textup{ black}}\rho\left(s\right)\left(2-\textup{val}_{w}\left(s\right)+\sum_{p\in s}\textup{Tan}\left(E^{\star}\mathcal{F},s,p\right)\right)\\
 &\hspace{1cm} +\sum_{\mathbb{K}\in\mathcal{K}_{1}}\sum_{s\in\mathbb{K}}\rho\left(s\right)s\left(2\right)+\sum_{\mathbb{K}\in\mathcal{K}_{2}}\sum_{s\in\mathbb{K}}\rho\left(s\right)s\left(2\right).
\end{align*}
Now, for $\mathbb{K}\in\mathcal{K}_{1}$, we can give the following
lower bound 
\begin{align*}
\sum_{s\in\mathbb{K}}\rho\left(s\right)s\left(2\right) & \geq\rho\left(s_{\mathbb{K}}\right)s_{\mathbb{K}}\left(2\right)+\sum_{s\in\mathbb{K}\setminus\left\{ s_{\mathbb{K}}\right\} }\rho\left(s\right)s\left(2\right)\\
 & \geq\min_{s\in\mathbb{K}}\rho\left(s\right).
\end{align*}
For $\mathbb{K}\in\mathcal{K}_{2},$ the following lower bound occurs
\[
\sum_{s\in\mathbb{K}}\rho\left(s\right)s\left(2\right)\geq\sum_{s\in\mathbb{K}}\rho\left(s\right)n_{s}\left(C\right).
\]
Finally, if $s$ is black, then 
\[
\rho\left(s\right)\left(2-\textup{val}_{w}\left(s\right)+\sum_{p\in s}\textup{Tan}\left(E^{\star}\mathcal{F},s,p\right)\right)\geq\rho\left(s\right)\left(2-\textup{val}_{w}\left(s\right)\right).
\]
Combining all these inequalities leads to 
\begin{align*}
\nu\left(\mathcal{F}\right) & \geq-1+\sum_{s\textup{ black}}\rho\left(s\right)\left(2-\textup{val}_{w}\left(s\right)\right)\\
 & \hspace{1cm}+\sum_{\mathbb{K}\in\mathcal{K}_{1}}\min_{s\in\mathbb{K}}\rho\left(S\right)+\sum_{\mathbb{K}\in\mathcal{K}_{2}}\sum_{s\in\mathbb{K}}\rho\left(s\right)n_{s}\left(C\right)=\nu\left(\mathcal{F^{\prime}}\right).
\end{align*}
\end{proof}
Propostion \ref{prop:algo} provides a simple algorithm that determines the minimal Saito number of a given equisingularity class of plane curves. Consider $E$ the desingularization process
of $C$ and $\mathbb{A}$ its dual graph. Choose any coloration of
the graph among the finite set of such coloration and set $s\left(2\right)$
as in Proposition \ref{prop:algo} : the value of $s\left(2\right)$
depends only on $C$ and on the chosen coloration. Each such choice
leads to a certain multiplicity of foliation. The smallest
multiplicity among them is the desired number. The complexity of this algorithm is $O(2^b)$ where $b$ is the length of the desingularization process. As a consequence, Theorem \ref{THEO1} stated in the introduction is proved.

In the sections below, we treat some examples, for which, beyond the algorithm, some formula can be established. 

\subsubsection{Irreducible curve.}
Let $C$ be an irreducible plane curve given by a parametrization $$\psi(t)=\left(t^{\nu_0},\sum_{i\geq \beta_1}a_it^{i}\right).$$ The equisingularity class $\textup{Top}(C)$ of $C$ can be totally determined by the dual graph of the desingularization process $E$ of $C$, equivalently by its characteristic exponents $\beta_0=\nu_0=\nu(C), \beta_1=\nu_1,\beta_2,\ldots ,\beta_g$ or by its value semigroup $\Gamma=\langle \nu_0,\nu_1,\ldots ,\nu_g\rangle$ (see \cite{zariski} for instance).

Notice that $C$ can be defined by the minimal polynomial $f\in\mathbb{C}\{x\}[y]$ of $$\sum_{i\geq \beta_1}a_ix^{\frac{i}{\nu_0}}.$$ Let $f_g\in\mathbb{C}\{x\}[y]$ be the minimal polynomial of the function $$\sum_{\beta_1\leq i< \beta_g}a_ix^{\frac{i}{\nu_0}},$$ according to \cite{zariski} we get $\nu(f_g)=\frac{\nu_0}{e_{g-1}}$ where $$e_{g-1}=gcd(\beta_0,\ldots ,\beta_{g-1})=\gcd(\nu_0,\ldots ,\nu_{g-1})$$ and the intersection multiplicity with $f$ is $\textup{I}(f,f_g)=\nu_g$.\\ Consider the differential $1-$form $\omega=\nu_0x\textup{d}f_g-\nu_gf_g\textup{d}x$. The foliation associated to $\omega$ leaves invariant the curves $x=0$ and $f_g=0$ whose strict transform are attached respectively in the first and last \emph{extreme}\footnote{{A extreme component of $E$ is an irreducible component that intersects only one other irreducible component of $E$.}} \emph{component} of $E$ (see Theorem 3.7 in \cite{CHH}). In addition, it is purely dicritical along the \emph{central component}, that is the one to which is attached the strict transform of $C$. In such component, the foliation admits infinite many invariant curves $C_u$ given by $$\psi_u(t)=\left (t^{\nu_0},\sum_{\beta_1\leq i<\beta_g}a_it^{i}+ut^{\beta_g}+\sum_{i>\beta_g}r(u)t^i\right )$$ for every $u\in\mathbb{C}\setminus\{0\}$ and $r(u)\in\mathbb{C}(u)$. Consequently, $C_u\in \textup{Top}(C)$ for any $u\in\mathbb{C}\setminus\{0\}$. In addition, we get 
\[\nu(\omega)=\frac{\nu_0}{e_{g-1}}.\]

On another hand, the simple structure of $E$ allows us to
follow Algorithm 1 described in Proposition \ref{prop:algo} \emph{by hand.} Indeed, the desingularization
process of $C$ is shown in Figure \ref{fig:Red-Irre}.
\begin{figure}
\begin{centering}
\includegraphics[width=6cm]{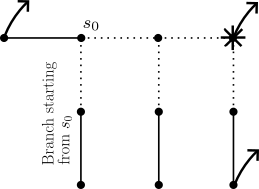}
\par\end{centering}
\caption{\label{fig:Red-Irre}Desingularization process for a irreducible plane curve and $\mathcal{F}_{\textup{min}}$.}
\end{figure}
Consider a foliation $\mathcal{F}_{\textup{min}}$ purely dicritical along the central component. Moreover, $\mathcal{F}_{\textup{min}}$ leaves invariant two regular curve attached to the first and the last extreme component of $E$ of respective multiplicities $1$  and $\frac{\nu(C)}{e_{g-1}}$. Applying Proposition \ref{Hertling} in that situation yields the formula
\[\nu\left(\mathcal{F}_{\textup{min}}\right)=\frac{\nu_0}{e_{g-1}}.\]
This value is also the minimum value for a Saito number in the equisingularity class of $C$. Indeed, consider a foliation $\mathcal{F}$  tangent to $C$. Proposition \ref{Hertling} is written
\[\nu\left(\mathcal{F}\right)=-1+\sum_{s\in\mathbb{A}[\mathcal{F},E]}s(1)s(2).\]
In view of the expression of $s(2)$, a term with a negative contribution in the above sum may appear only if $\mathcal{F}$ is dicritical along some component $s_0$ of valence $3$ and any component attached to $s_0$ is not dicritical. However, doing so, along the branch of the tree attached to $s_0$ there must be some component $s_1$ such that $s_1(2)>0$. Therefore, the contribution of both components $s_0$ and $s_1$ is written 
\[s_0(1)(-1+\alpha)+s_1(1)s_1(2)\] with $\alpha\geq 0$. Since, $s_1(1)\geq s_0(1)$, as the whole, the contribution keeps on being positive : as a consequence, it is \emph{useless} for $\mathcal{F}$ to be dicritical along $s_0$ in order to reach the desired minimum and the valuation of $\mathcal{F}$ is bigger than the valuation of $\mathcal{F}_{\textup{min}}$. Finally, we recover the result
of \cite{MR4310291}, that is  
\[
\min_{C^{\prime}\in\textup{Top}\left(C\right)}\mathfrak{s}\left(C^{\prime}\right)=\nu(\omega)=\nu\left(\mathcal{F}_{\textup{min}}\right)=\frac{\nu_0}{e_{g-1}}.
\]

\begin{remark} The Saito numbers of the curves in $\textup{Top}(C)$ where $C$ is an irreducible curve for which $e_{g-1}=2$, are constant. Indeed, from the result above and \cite{YoyoBMS}, we have 
\[
\min_{C^{\prime}\in\textup{Top}\left(C\right)}\mathfrak{s}\left(C^{\prime}\right)=\max_{C^{\prime}\in\textup{Top}\left(C\right)}\mathfrak{s}\left(C^{\prime}\right)=\frac{\nu_0}{2}.\]
\end{remark}

\subsubsection{Non-irreducible Curves.}

\paragraph{A curve with two components.}
As a generalization of Example \ref{first.example} let us consider $C$ be the plane curve defined by \[f=\left(y^{\nu_0}-x^{\nu_1}\right)\left(x^{\nu_0}-y^{\nu_1}\right)=0\] where $1=gcd(\nu_0,\nu_1)<\nu_0<\nu_1$. 
It can be seen that the two following differential $1-$forms 
\begin{eqnarray*}
\omega_1&=&\left (
\nu_1^2x^{\nu_1-\nu_0}(y^{\nu_1}-x^{\nu_0})-\nu_0^2(y^{\nu_0}-x^{\nu_1})\right )dx\\
&&+\nu_0\nu_1xy^{\nu_0-1}(1-x^{\nu_1-\nu_0}y^{\nu_1-\nu_0})dy\\
\omega_2&=&\left (\nu_1^2y^{\nu_1-\nu_0}(y^{\nu_0}-x^{\nu_1})-
\nu_0^2(y^{\nu_1}-x^{\nu_0})\right )dy\\
&&-\nu_0\nu_1x^{\nu_0-1}y(1-x^{\nu_1-\nu_0}y^{\nu_1-\nu_0})dx    
\end{eqnarray*}
satisfy the Saito criterion, that is $$\omega_1\wedge \omega_2=(\nu_1-\nu_0)(\nu_1+\nu_0)(\nu_0^2-\nu_1^2x^{\nu_1-\nu_0}y^{\nu_1-\nu_0})f\dd x\wedge \dd y$$ and thus  $\{\omega_1,\omega_2\}$ is a Saito basis for $C$. As a consequence, we find \[\mathfrak{s}(C)=\nu_0.\] 
Since the latter is also an upper bound for the maximum Saito number in the associated equisingularity class, we obtain 
{\[\max_{C'\in\textup{Top}(C)}\mathfrak{s}(C')=\nu_0.\]}
On the other hand, the minimal Saito number is reached for the foliation depicted in Figure \ref{fig:Min.C}, and thus 
{\[\min_{C'\in\textup{Top}(C)}\mathfrak{s}(C')=2.\]}
\begin{figure}
\begin{centering}
\includegraphics[width=4.5cm]{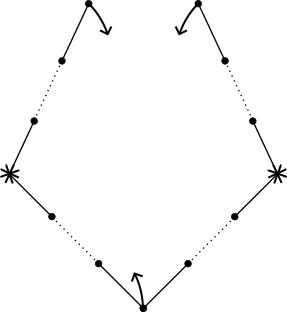}
\par\end{centering}
\caption{\label{fig:Min.C}Topology of the foliation with the minimal Saito number for $C:(y^{\nu_0}-x^{\nu_1})(x^{\nu_0}-y^{\nu_1})=0$.}
\end{figure}

\paragraph{Curves with many components.}
Let $C$ be a curve desingularized by $E$. Let $\mathbb{A}$ be the dual graph of $E$. We say that $C$ \emph{has a lot of components} if
\[\forall s\in\mathbb{A},\  n_s(C)\geq 2+\sum_{s^\prime,s\cap s^\prime\neq\emptyset}\frac{\rho(s^\prime)}{\rho(s)}. \]
The curves with a lot of components are of special interest because the foliation associated to their minimal Saito number happens to be \emph{absolutely dicritical} as defined in \cite{MR2806526}. 
\begin{proposition}
If $C$ has a lot of components 
then the minimal Saito number in $\textup{Top}(C)$ is equal to
\[\min_{C^{\prime}\in\textup{Top}(C)}\mathfrak{s}(C^{\prime}) =-1+2\sum_{s\in \mathbb{A}}\rho(s).\]
and is reached by an absolutely dicritical foliation with respect to $E$.
\end{proposition}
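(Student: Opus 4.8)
The plan is to realize the announced minimum by the \emph{all-black} coloration of $\mathbb{A}$ and then to show, through the bookkeeping of Algorithm~1, that no other coloration produces a smaller multiplicity. First I would color every vertex of $\mathbb{A}$ black. Then $\textup{val}_w(s)=0$ for every vertex, so the prescription of Proposition~\ref{prop:algo} sets $s(2)=2-\textup{val}_w(s)=2$ everywhere. Conditions $(a)$ and $(c)$ of Proposition~\ref{prop:constr} are vacuous and $(b)$ reads $s(2)+\textup{val}_w(s)=2\geq 2$, so Proposition~\ref{prop:constr} yields a foliation $\mathcal{F}_{\textup{black}}$ generically transverse to every component of the divisor, i.e. absolutely dicritical in the sense of \cite{MR2806526}. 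Attaching at each component $s$ a choice of $n_s(C)$ transverse leaves through generic points of $s$, avoiding the finitely many singular points, produces a curve $C'$ with $n_s(C')=n_s(C)$ and with the dual graph of $E$; by \cite{zariskitop} this gives $C'\in\textup{Top}(C)$. Proposition~\ref{Hertling} then evaluates
\[\nu(\mathcal{F}_{\textup{black}})=-1+\sum_{s\in\mathbb{A}}\rho(s)\,s(2)=-1+2\sum_{s\in\mathbb{A}}\rho(s),\]
which is exactly the asserted value.

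Next I would establish minimality, which by the discussion following Proposition~\ref{prop:algo} computes precisely $\min_{C'\in\textup{Top}(C)}\mathfrak{s}(C')$. Since $C$ has a lot of components, $n_s(C)\geq 2>0$ for every $s$, so property $(c)$ never triggers a correction and the algorithm assigns $s(2)=n_s(C)$ at each white vertex and $s(2)=2-\textup{val}_w(s)$ at each black vertex. Writing $B$ and $W$ for the black and white vertices of an arbitrary coloration, a direct subtraction together with the edge identity $\sum_{s\in B}\rho(s)\textup{val}_w(s)=\sum_{s'\in W}\sum_{s\in B,\,s\cap s'\neq\emptyset}\rho(s)$ gives
\[\nu(\mathcal{F}')-\nu(\mathcal{F}_{\textup{black}})=\sum_{s\in W}\rho(s)\bigl(n_s(C)-2\bigr)-\sum_{s\in B}\rho(s)\,\textup{val}_w(s)\geq\sum_{s\in W}\Bigl(\rho(s)\bigl(n_s(C)-2\bigr)-\sum_{s'\cap s\neq\emptyset}\rho(s')\Bigr),\]
where the inequality enlarges each inner edge sum from black neighbors to all neighbors. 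The hypothesis that $C$ has a lot of components is exactly the statement that every summand on the right is nonnegative, whence $\nu(\mathcal{F}')\geq\nu(\mathcal{F}_{\textup{black}})$ for all colorations and $\mathcal{F}_{\textup{black}}$ is a minimizer.

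The main obstacle I anticipate is not the counting itself, which is routine once property $(c)$ is cleared, but rather isolating the precise point at which the defining inequality of ``a lot of components'' is forced. The heuristic is a break-even analysis: the only way a coloration could beat the all-black one is to turn some vertex $s$ white, trading the contribution $2\rho(s)$ it carries when black against $\rho(s)\,n_s(C)$, while possibly saving the coupling weights $\rho(s')$ on the incident edges; the bound $n_s(C)\geq 2+\sum_{s'\cap s\neq\emptyset}\rho(s')/\rho(s)$ is exactly the threshold that makes every such trade non-profitable. I would therefore organize the estimate vertex by vertex over $W$, so that the hypothesis is invoked in its sharpest local form, and I would close with the remark that when the defining inequality is strict at every vertex the right-hand side above is strictly positive for any coloration with $W\neq\emptyset$, so that $\mathcal{F}_{\textup{black}}$ is then the \emph{unique} minimizer, which explains why the extremal foliation is compelled to be absolutely dicritical.
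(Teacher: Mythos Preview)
Your argument is correct and reaches the same conclusion as the paper, but the organization is genuinely different. The paper argues iteratively: starting from a minimizing foliation built by Algorithm~1, it picks a single white vertex $s$, constructs a new foliation $\mathcal{F}_s$ in which only $s$ has been turned black, and computes
\[
\nu(\mathcal{F})-\nu(\mathcal{F}_s)=\rho(s)\bigl(n_s(C)-2+\textup{val}_w(s)\bigr)-\sum_{\substack{s'\cap s\neq\emptyset\\ s'\textup{ black}}}\rho(s'),
\]
which the hypothesis forces to be nonnegative; repeating until no white vertex remains yields the absolutely dicritical minimizer. You instead compare an arbitrary coloration directly against the all-black one in a single global estimate, rewriting the cross term via the edge identity and then weakening each inner sum from black neighbors to all neighbors so that the ``a lot of components'' inequality applies vertex by vertex over $W$. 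Your route is slightly cleaner: it avoids the iteration and the need to track how $\textup{val}_w$ shifts under a single flip, and it makes transparent that the hypothesis is exactly the break-even condition for each individual white vertex. The paper's local-improvement viewpoint, on the other hand, has the advantage of showing that from \emph{any} starting coloration one can monotonically descend to the all-black one, which is a marginally stronger structural statement. Your observation that strict inequalities force uniqueness of the minimizer is a nice addendum not present in the paper.
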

In particular, the minimal Saito number of a topological class of curve with a lot of components does not depends on $C$ anymore but only on $E$.
\begin{proof}
Let $\mathcal{F}$ be a foliation leaving invariant $C^{\prime}$ and reaching the minimum above. According to Proposition \ref{prop:algo}, we can suppose that $\mathcal{F}$ is constructed by gluing local models of Proposition \ref{prop:constr}. Consider now $s\in \mathbb{A}[\mathcal{F},E]$ and suppose that $s$ is white. Using still the Lins Neto's argument, we can construct a foliation $\mathcal{F}_s$ such that $\mathbb{A}[\mathcal{F}_s,E]$ has the same coloration as $\mathbb{A}[\mathcal{F}_s,E]$ except that $s$ is black in $\mathbb{A}[\mathcal{F}_s,E]$. In particular, following Proposition \ref{Hertling}
\begin{eqnarray*}
\nu(\mathcal{F})&=&-1+\rho(s)n_s(C)
\\ & & +\sum_{s^\prime,s\cap s^\prime\neq\emptyset}\delta_{s^\prime} \rho(s^\prime)n_{s^\prime}(C)+(1-\delta_{s^\prime})\rho(s^\prime)(2-\textup{val}_w(s^\prime))\\
& & +(\textup{does not depend on $s$})
\end{eqnarray*}
and 
\begin{eqnarray*}
\nu(\mathcal{F}_s)&=&-1+\rho(s)(2-\textup{val}_w(s))
\\& & +\sum_{s^\prime,s\cap s^\prime\neq\emptyset}\delta_{s^\prime} \rho(s^\prime)n_{s^\prime}(C)+(1-\delta_{s^\prime})\rho(s^\prime)(3-\textup{val}_w(s^\prime))
\\& &+(\textup{does not depend on $s$})
\end{eqnarray*} 
where $\delta_s=1$ if $s$ is white, $0$ otherwise. Therefore, we get 
\[\nu(\mathcal{F})-\nu(\mathcal{F}_s)=\rho(s)(n_s(C)-2+\textup{val}_w(s))-\sum_{s^\prime,s\cap s^\prime\neq\emptyset}(1-\delta_{s^\prime})\rho(s^\prime).\]
It can be seen that the latter is a positive expression under the assumption of the proposition. As a consequence, we can always decrease the multiplicity of a foliation leaving invariant a curve $C^{\prime}\in\textup{Top}(C)$ by making black any of component $\mathbb{A}[\mathcal{F},E]$. In the end, the obtained foliation is \emph{absolutely dicritical} as defined in \cite{MR2806526} with respect to $E$ and its multiplicity is \[-1+\sum_{s\in \mathbb{A}} \rho(s)(2-\textup{val}_w(s))=-1+2\sum_{s\in \mathbb{A}} \rho(s).\]
\end{proof}

\section{Range of the Saito function on given topological classes.}

Let us consider $C$ a germ of plane curve. In  \cite{genzmer2020saito}, it is proved that the maximum Saito number along the topological class $\textup{Top}\left(C\right)$ is given by
\[
\max_{C^{\prime}\in\textup{Top}\left(C\right)}\mathfrak{s}(C^{\prime})=\left\{ \begin{array}{rl}
\frac{\nu\left(C\right)}{2}-1 & \textup{ if \ensuremath{C} is \emph{radial} and \ensuremath{\nu\left(C\right)} is even}\vspace{0.2cm}\\
\left\lfloor \frac{\nu\left(C\right)}{2}\right\rfloor  & \textup{ if not,}
\end{array}\right.
\]
\emph{radial} being defined in \cite{genzmer2020saito}. Actually, the above maximum is reached for a generic element $C^{\prime}$ in the equisingularity class of $C$. Moreover,
in the previous section, we provide an algorithm {(see Proposition \ref{prop:algo})} to compute the minimum
$
\min_{C^{\prime}\in\textup{Top}\left(C\right)}\mathfrak{s}(C^{\prime}).$ It is of natural interest to look at the integers between these two bounds that are reached as a Saito number of a certain analytical class of curves in the given equisingularity class. 

For that purpose, let us consider the curve $C_{N,\nu_{0},\nu_{1}}$ given by 
\[
f_{N,\nu_{0},\nu_{1}}=y^{N\nu_{0}}-x^{N\nu_{1}}=0
\]
where $N>0$ and $\nu_{0}\leq\nu_{1}$ are relatively prime. Since, considering
the two $1-$forms 
\[
\omega=\nu_{1}y\textup{d}x-\nu_{0}x\textup{d}y\textup{ and }\eta=\textup{d}f_{N,\nu_{0},\nu_{1}}
\]
leads to the Saito criterion 
\[
\omega\wedge\eta=N\nu_{0}\nu_{1}f_{N,\nu_{0},\nu_{1}}\textup{d}x\wedge\textup{d}y,
\]
we obtain
\[
\min_{C\in\textup{Top}\left(C_{N,\nu_{0},\nu_{1}}\right)}\mathfrak{s}(C)=\min\{1,N\nu_0-1\},
\] which is equal to $1$ except when $N\nu_0=1$, that is when the curve $C_{N,\nu_{0},\nu_{1}}$ is regular, for which the minimum in $0$. As a consequence of the computations in \cite[Proposition 8]{YoyoBMS}, it appears that the curve $C_{N,\nu_{0},\nu_{1}}$ is radial if and only if $\nu_{1}=1$
and $N\geq 3.$
In particular, if $\nu_1>1$  then \[\max_{C\in\textup{Top}\left(C_{N,\nu_{0},\nu_{1}}\right)}\mathfrak{s}(C)=\left\lfloor\frac{N\nu_0}{2}\right\rfloor.\]
The goal of the next subsections is to show that the range of the map
\[C\in \textup{Top}\left(C_{N,\nu_0,\nu_1}\right)\mapsto \mathfrak{s}(C)\]
is the whole set of integers between the two above extrema.

\subsection{Saito numbers in the topological class $\textup{Top}(C_{1,\nu_0,\nu_1})$.}

If $\nu_0=1$ then $\mathfrak{s}(C)=0$, thus, in what follows, we suppose that $\nu_0>1$. In particular, we obtain 
\[\min_{C\in\textup{Top}\left(C_{1,\nu_{0},\nu_{1}}\right)}\mathfrak{s}(C)=1\quad \textup{ and } \quad\max_{C\in\textup{Top}\left(C_{N,\nu_{0},\nu_{1}}\right)}\mathfrak{s}(C)=\left\lfloor\frac{\nu_0}{2}\right\rfloor.\]

Since $\mathfrak{s}(C_{1,\nu_0,\nu_1})=1$, to show that any $k\in\{1,\ldots ,\left \lfloor \frac{\nu_0}{2}\right \rfloor\}$ is achieved as a Saito number for an element in $\textup{Top}(C_{1,\nu_0,\nu_1})$ it is sufficient to consider $k>1$ and $\nu_0>3$.

Given any irreducible plane curve $C$ with parameterization $\psi(t)\in\mathbb{C}\{t\}\times\mathbb{C}\{t\}$ the semigroup $\Gamma_C$ associated to $C$ is 
$$\Gamma_C=\{\nu(\psi^*(h));\ h\in\mathbb{C}\{x,y\}\ \mbox{such that}\ \psi^*(h)\neq 0\}.$$ We can extend the valuation $\nu(\cdot)$ to a differential $1$-form not tangent to $C$ and we define the set
$$\Lambda_C=\{\nu(\psi^*(\eta))+1;\ \eta\ \mbox{is a differential $1$-form not tangent to}\ C \}.$$
The set $\Lambda_C$
is an analytical invariant for $C$ and it is a $\Gamma_C$-semimodule finitely generated \cite{HH-algorithm}, that is,
there exist 
$\lambda_{-1},\lambda_{0},\ldots ,\lambda_s\in\Lambda_C$ such that $$\Lambda_C=\bigcup_{i=-1}^s(\Gamma_C+\lambda_i)$$ with $\lambda_j\not\in\Lambda_{j-1}:=\bigcup_{i=-1}^{j-1}(\Gamma_C+\lambda_i)$ for $0\leq j\leq s$. In \cite{HH-algorithm} we find an algorithm to compute a minimal system of generators of {$\Lambda_C$} for any irreducible plane curve $C$. 

Given $C\in\textup{Top}\left(C_{1,\nu_0,\nu_1}\right)$, that is, $\Gamma_C=\langle \nu_0,\nu_1\rangle$ there is another valuation associated to $C$ called \emph{divisorial valuation} given as the following. If $h=\sum_{i,j\geq 0}h_{ij}x^iy^j\in\mathbb{C}\left\{ x,y\right\}\setminus\{0\}$ the divisorial valuation $\nu_{D}(h)$ of $h$ is (see \cite{cano2024computing})
\begin{equation}\label{divisorial-function}
\nu_{D}\left(h\right)=\min\left \{\left.\nu_{0}i+\nu_{1}j\right| \ h_{ij}\neq0\right \}
\end{equation}
and we extend it to a differential $1$-form $A\textup{d}x+B\textup{d}y$ by $$\nu_{D}\left(A\textup{d}x+B\textup{d}y\right)=\min\{\nu_{D}(A)+\nu_0, \nu_{D}(B)+\nu_1\}.$$

For $C\in\textup{Top}(C_{1,\nu_0,\nu_1})$, Cano, Corral and Senovilla-Sanz in  \cite{cano2024computing} introduce a finite set of integers from which is derived a characterization of a Saito basis for $C$. In the following, we briefly describe this construction. Let $\lambda_{-1},\lambda_{0},\ldots ,\lambda_s\in\Lambda_C$ be the miminal generators for $\Lambda_C$. Setting $t_0=\lambda_{0}=\nu_1$, for $1\leq i\leq s+1$, define inductively the following data
\begin{equation}\label{integers}
\begin{array}{lll}
u_i^{\star}&=&\min\{\lambda_{i-1}+\nu_\star n\in\Lambda_{i-2};\ n\geq 1\}\\
t_i^{\star}&=&t_{i-1}+u_i^{\star}-\lambda_{i-1}
\end{array}
\end{equation}
where $\star=0,1$ and
\[t_i=\min\{t_i^{0},t_i^{1}\} \quad \tilde{t_i}=\max\{t_i^{0},t_i^{1}\}.\]
The mentioned above result is enunciated below.
\begin{theorem}[Cano, Corral and Senovilla-Sanz] For $C\in\textup{Top}(C_{1,\nu_0,\nu_1})$, there exist two differential $1$-forms $\omega_{s+1}$ and $\tilde{\omega}_{s+1}$ leaving $C$ invariant such that
	$$\nu_{D}(\omega_{s+1})=t_{s+1}\ \ \mbox{and}\ \ \nu_{D}(\tilde{\omega}_{s+1})=\tilde{t}_{s+1}.$$
Moreover, for any pair of differential $1$-forms as above, the set $\{\omega_{s+1},\tilde{\omega}_{s+1}\}$ is a Saito basis for $C$.
\end{theorem}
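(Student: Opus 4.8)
The plan is to reduce the statement to a purely numerical criterion on divisorial valuations and then to verify that criterion by following the inductive construction \eqref{integers}. Recall from Saito's criterion that, since $\Omega^1(\log C)$ is free of rank $2$, a pair $\{\omega,\tilde{\omega}\}\subset\Omega^1(\log C)$ is a Saito basis if and only if, writing both forms in terms of a fixed Saito basis $\{\eta_1,\eta_2\}$, the resulting $2\times 2$ change-of-basis determinant is a unit. Because the divisorial valuation \eqref{divisorial-function} of a unit is $0$ while $\nu_D(\eta_1\wedge\eta_2)=\nu_D(f\,\dd x\wedge\dd y)=\nu_0\nu_1+\nu_0+\nu_1$, this is in turn equivalent to the single numerical equality
\[\nu_D(\omega\wedge\tilde{\omega})=\nu_0\nu_1+\nu_0+\nu_1.\]
The first thing I would record are the two general inequalities framing this equality: for any two germs of $1$-forms one has the superadditivity $\nu_D(\omega\wedge\tilde{\omega})\ge \nu_D(\omega)+\nu_D(\tilde{\omega})$ (a direct check on the four weighted monomials occurring in the determinant $A_1B_2-A_2B_1$), and for invariant forms the membership $\omega\wedge\tilde{\omega}\in\langle f\rangle\,\dd x\wedge\dd y$ forces $\nu_D(\omega\wedge\tilde{\omega})\ge\nu_0\nu_1+\nu_0+\nu_1$.

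For the existence of $\omega_{s+1}$ and $\tilde{\omega}_{s+1}$, I would run the Delorme-type recursion \eqref{integers} constructively. Starting from an invariant form realizing $t_0=\nu_1$, at each step the definition of $u_i^\star$ as the least element $\lambda_{i-1}+\nu_\star n$ that re-enters $\Lambda_{i-2}$ is precisely a syzygy in the $\Gamma_C$-semimodule $\Lambda_C$: it guarantees that the leading weighted part of a form of divisorial value $\lambda_{i-1}$ can be cancelled against a suitable monomial multiple (a power of $x$ when $\star=0$, of $y$ when $\star=1$) of a form of lower value, thereby producing a new invariant $1$-form of divisorial value exactly $t_i^\star=t_{i-1}+u_i^\star-\lambda_{i-1}$. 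Iterating through $i=1,\dots,s+1$ in the two directions $\star=0,1$ yields invariant forms of divisorial values $t_{s+1}$ and $\tilde{t}_{s+1}$, which settles the existence part.

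It then remains to establish two facts. First, the numerical identity $t_{s+1}+\tilde{t}_{s+1}=\nu_0\nu_1+\nu_0+\nu_1$, which I would prove by induction on $i$: the quantity $t_i^0+t_i^1$ telescopes through the recursion \eqref{integers}, the increments $u_i^0-\lambda_{i-1}$ and $u_i^1-\lambda_{i-1}$ being controlled by the standard relations among the minimal generators $\lambda_{-1},\dots,\lambda_s$ of the semimodule $\Lambda_C$. Second, and this is where the real content lies, I must show the non-cancellation $\nu_D(\omega_{s+1}\wedge\tilde{\omega}_{s+1})=\nu_D(\omega_{s+1})+\nu_D(\tilde{\omega}_{s+1})$; equivalently, that the leading weighted-homogeneous parts of the two forms are not proportional. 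This should be forced by the fact that $t_{s+1}$ and $\tilde{t}_{s+1}$ arise from the two distinct directions $\star=0$ and $\star=1$: by minimality in the construction, no invariant form of value strictly below $\tilde{t}_{s+1}$ is independent from $\omega_{s+1}$ modulo the maximal ideal, so the leading part of $\tilde{\omega}_{s+1}$ cannot be a multiple of that of $\omega_{s+1}$, and the wedge of the leading parts survives.

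Combining the two facts gives $\nu_D(\omega_{s+1}\wedge\tilde{\omega}_{s+1})=t_{s+1}+\tilde{t}_{s+1}=\nu_0\nu_1+\nu_0+\nu_1$, which is exactly the numerical criterion; hence the change-of-basis determinant is a unit and $\{\omega_{s+1},\tilde{\omega}_{s+1}\}$ is a Saito basis. Since this argument uses only the values $t_{s+1}$ and $\tilde{t}_{s+1}$ together with the non-proportionality forced by minimality, it applies to \emph{any} pair of invariant forms realizing those divisorial values, which yields the final ``moreover'' clause. I expect the main obstacle to be the non-cancellation step: proving in general that two invariant forms attaining the extremal values $t_{s+1}$ and $\tilde{t}_{s+1}$ automatically have transverse leading parts requires a careful minimality/Nakayama argument tied to the $\star=0,1$ dichotomy in \eqref{integers}, rather than a direct computation.
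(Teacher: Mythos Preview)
The paper does not prove this theorem. It is stated with attribution to Cano, Corral and Senovilla-Sanz and cited from \cite{cano2024computing}; the paper only \emph{uses} it in the proof of Proposition~\ref{irred}. There is therefore no proof in the paper to compare your proposal against.

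As a separate remark on your outline: the strategy of reducing to the equality $\nu_D(\omega\wedge\tilde{\omega})=\nu_0\nu_1+\nu_0+\nu_1$ via Saito's criterion and then splitting into (i) the numerical identity $t_{s+1}+\tilde{t}_{s+1}=\nu_0\nu_1+\nu_0+\nu_1$ and (ii) non-cancellation of weighted leading parts is the natural one and is indeed how the result is obtained in \cite{cano2024computing}. You correctly identify the non-cancellation step as the substantive point. Note, however, that your justification for the ``moreover'' clause is too quick: for an \emph{arbitrary} pair of invariant forms with divisorial values $t_{s+1}$ and $\tilde{t}_{s+1}$ you cannot appeal to the specific $\star=0$ versus $\star=1$ provenance of the constructed forms, since the given forms need not arise from that recursion. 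One must instead argue that the weighted initial part of any invariant form of value $t_{s+1}$ is determined up to scalar (and similarly for $\tilde{t}_{s+1}$), so that transversality of the constructed pair forces transversality of every such pair. This extra uniqueness-of-initial-part argument is what actually delivers the ``for any pair'' conclusion.
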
\label{theo-CanoCorral}

For $\nu_0>3$ and any $1<k\leq \left\lfloor \frac{\nu_0}{2}\right \rfloor$, let us consider the differential $1$-form defined by
\begin{equation}\label{1form}
\omega=\nu_1x^{k-1}(\nu_0xdy-\nu_1ydx)+\nu_0(\gamma -\nu_1)y^{\nu_0-k}dy
\end{equation} 
with $\gamma:=(\nu_0-k+1)\nu_1-k\nu_0$. 

Notice that for any plane curve $C\in\textup{Top}\left(C_{1,\nu_0,\nu_1}\right)$ we get
\[
\nu_{D}\left(\omega\right)=\nu_{D}\left(x^{k-1}\left(\nu_0x\textup{d}y-\nu_1y\textup{d}x\right)\right)=k\nu_0+\nu_1.
\]

\begin{lemma}\label{lemma-aux}
There is a curve $C\in\textup{Top}(C_{1,\nu_0,\nu_1})$ invariant by $\omega$ and given by a parametrization of the form
\begin{equation}\label{parametrization}\psi(t)=\left ( t^{\nu_0},t^{\nu_1}+t^{\gamma}+\sum_{i\geq 2\gamma-\nu_1}a_it^i\right ).\end{equation}   
\end{lemma}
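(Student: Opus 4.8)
The plan is to read the invariance of a parametrized curve under the foliation defined by $\omega=A\,\dd x+B\,\dd y$ as the scalar condition $\psi^\star\omega=0$, that is $A(\psi(t))\,x'(t)+B(\psi(t))\,y'(t)=0$, and then to solve this equation with $x(t)=t^{\nu_0}$ imposed from the start. Reading off from \eqref{1form} the coefficients $A=-\nu_1^2x^{k-1}y$ and $B=\nu_0\nu_1x^{k}+\nu_0(\gamma-\nu_1)y^{\nu_0-k}$ and substituting $x=t^{\nu_0}$, the condition becomes, after division by $\nu_0$, the first-order equation
\begin{equation}\label{plan-ode}
\nu_1\,t^{\,k\nu_0-1}\bigl(t y'-\nu_1 y\bigr)+(\gamma-\nu_1)\,y^{\nu_0-k}\,y'=0 .
\end{equation}
The operator $y\mapsto ty'-\nu_1 y$ kills $t^{\nu_1}$, which is what forces $\nu(y)=\nu_1$; moreover a short computation shows that the model curve $(t^{\nu_0},t^{\nu_1})$ fails to be invariant precisely at order $(\nu_0-k+1)\nu_1-1$. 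The definition $\gamma=(\nu_0-k+1)\nu_1-k\nu_0$ is exactly the one for which $k\nu_0-1+\gamma=(\nu_0-k+1)\nu_1-1$, so that a $t^{\gamma}$-term in $y$ feeds, through the first summand of \eqref{plan-ode}, a contribution at the very order of this obstruction and can cancel it. This is the conceptual heart of the choice of $\gamma$.

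To run the induction and to settle convergence simultaneously, I would substitute $y=t^{\nu_1}z$ and then $\tau=t^{\delta}$, where $\delta:=\gamma-\nu_1=\nu_0\nu_1-k(\nu_0+\nu_1)$; one checks $\delta\geq1$ from $k\leq\lfloor\nu_0/2\rfloor<\nu_0\nu_1/(\nu_0+\nu_1)$. A direct manipulation turns \eqref{plan-ode} into the \emph{regular} differential equation
\[
\frac{\dd z}{\dd\tau}=\frac{-\nu_1\,z^{\,\nu_0-k+1}}{\nu_1+\delta\,\tau\,z^{\,\nu_0-k}},\qquad z(0)=1,
\]
whose right-hand side is holomorphic near $(\tau,z)=(0,1)$ since the denominator takes the value $\nu_1\neq0$ there. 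By the Cauchy existence theorem this equation has a unique holomorphic solution $z(\tau)$, so that $y=t^{\nu_1}z(t^{\delta})$ is a convergent series; expanding $z(\tau)=1+z_1\tau+z_2\tau^2+\cdots$ produces
\[
y=t^{\nu_1}+z_1\,t^{\gamma}+\sum_{n\geq2}z_n\,t^{\nu_1+n\delta},
\]
where the coefficient $z_1=\dd z/\dd\tau(0)$ is read off from the reduced equation. Since $\nu_1+\delta=\gamma$ and $\nu_1+2\delta=2\gamma-\nu_1$, this is exactly a parametrization of the form \eqref{parametrization}; in particular the window $(\gamma,2\gamma-\nu_1)$ is automatically empty because $z$ is a power series in $\tau=t^{\delta}$.

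It remains to check that the resulting curve $C$ lies in $\textup{Top}(C_{1,\nu_0,\nu_1})$. Because $x=t^{\nu_0}$ and $y=t^{\nu_1}+\cdots$ with $\gcd(\nu_0,\nu_1)=1$, the parametrization is primitive, hence $C$ is irreducible, and its first characteristic exponent is $\nu_1$; as the gcd has already dropped to $1$ at this stage, the remaining exponents $\nu_1+n\delta$ do not alter the equisingularity type. Thus the Puiseux characteristic of $C$ is $(\nu_0;\nu_1)$, its semigroup is $\langle\nu_0,\nu_1\rangle$, and $C\in\textup{Top}(C_{1,\nu_0,\nu_1})$.

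I expect the single delicate point to be convergence of this invariant curve, since \eqref{plan-ode} is genuinely singular at $t=0$; the change of variables $y=t^{\nu_1}z$, $\tau=t^{\delta}$ is precisely what desingularizes it and reduces the whole statement to the elementary Cauchy theorem, and recognizing that this is the correct substitution — dictated by the arithmetic relation $\gamma=(\nu_0-k+1)\nu_1-k\nu_0$ — is the crux. Everything else, namely the order bookkeeping that both pins down $\gamma$ and forces the empty gap, together with the final equisingularity check, is then routine.
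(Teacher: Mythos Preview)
Your argument is correct and takes a genuinely different route from the paper's. The paper substitutes the undetermined series $\psi(t)=(t^{\nu_0},\,t^{\nu_1}+t^{\gamma}+\sum_{i\ge 2\gamma-\nu_1}a_it^i)$ directly into $\omega$, expands $\psi^*\omega$ term by term, and reads off a triangular recursion in which each $a_i$ is an explicit rational expression in the preceding $a_j$; this yields a \emph{formal} parametrization and convergence is never discussed. You instead recognize the substitution $y=t^{\nu_1}z$, $\tau=t^{\delta}$ that regularizes the pulled-back equation at $(\tau,z)=(0,1)$ and then invoke Cauchy's theorem. This buys you analyticity of the invariant curve for free, and it also explains structurally why only exponents of the form $\nu_1+n\delta$ actually occur in $y$ --- a feature the paper's recursion hides. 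Conversely, the paper's explicit recursion makes the individual coefficients $a_i$ transparent, which is convenient when one later computes the generators of $\Lambda_C$. One small remark: evaluating your reduced equation at the initial point gives $z_1=\dot z(0)=-1$, so the coefficient of $t^{\gamma}$ comes out as $-1$ rather than $+1$ as written in the lemma; the same sign inconsistency is latent in the paper's own expansion (the leading order of $\psi^*\omega$ does not cancel with $+t^{\gamma}$), and it is immaterial for the sequel since only the nonvanishing of that coefficient is used in the computation of $\lambda_1$.
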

\begin{proof}
Considering $\psi(t)$ as in the lemma, we have
$$\psi^*(\nu_0xdy-\nu_1ydx)=\nu_0(\gamma -\nu_1)t^{\gamma+\nu_0-1}+\sum_{i\geq 2\gamma-\nu_1}\nu_0(i-\nu_1)a_it^{i+\nu_0-1}$$
and $$\psi^*(y^{\nu_0-k+1})=t^{(\nu_0-k+1)\nu_1}+(\nu_0-k+1)t^{\gamma+(\nu_0-k)\nu_1}+\sum_{j>(\nu_0-k)\nu_1+\gamma}Q_jt^{j}$$ where $Q_j\in\mathbb{C}[a_{2\gamma-\nu_1+1},\ldots ,a_{j-(\nu_0-k)\nu_1}]$. In this way, since $$\gamma=(\nu_0-k+1)\nu_1-k\nu_0$$ we get 
\begin{eqnarray*}\psi^*(y^{\nu_0-k}dy)&=&\nu_1t^{(\nu_0-k+1)\nu_1-1}+(\gamma+(\nu_0-k)\nu_1)t^{\gamma+(\nu_0-k)\nu_1-1} \\ &&+\sum_{j>(\nu_0-k)\nu_1+\gamma}\frac{j}{\nu_0-k+1}Q_jt^{j-1}  \\
&=& \nu_1t^{\gamma+k\nu_0-1}+(\gamma+(\nu_0-k)\nu_1)t^{2\gamma-\nu_1+k\nu_0-1}+ \\ &&+\sum_{i>2\gamma-\nu_1}\frac{i+k\nu_0}{\nu_0-k+1}Q_{i+k\nu_0}t^{i+k\nu_0-1}
\end{eqnarray*}
and
\begin{eqnarray*}
\psi^*(\omega)&=&\nu_0(\gamma-\nu_1)\left (2\nu_1a_{2(\gamma-\nu_1)} -(\gamma+(\nu_0-k)\nu_1 )\right )t^{\gamma+(\nu_0-k)\nu_1-1}\\
&&+\nu_0\sum_{i> 2\gamma-\nu_1}\left (\nu_1(i-\nu_1)a_i-\frac{(i+k\nu_0)(\gamma-\nu_1)}{\nu_0-k+1}Q_{i+k\nu_0} \right )t^{i+k\nu_0-1}.
\end{eqnarray*}

From $k\leq \nu_0-k$, we get $i+k\nu_0-(\nu_0-k)\nu_1<i$ and $$Q_{i+k\nu_0}\in\mathbb{C}[a_{2\gamma-\nu_1},\ldots ,a_{i+k\nu_0-(\nu_0-k)\nu_1}]\subseteq\mathbb{C}[a_{2\gamma-\nu_1},\ldots ,a_{i-1}].$$ So, setting
$$a_{2\gamma-\nu_1}=\frac{\gamma+(\nu_0-k)\nu_1}{2\nu_1}\ \ \ \mbox{and}\ \ \ a_i=\frac{(\gamma-\nu_1)(i+k\nu_0)}{\nu_1(i-\nu_1)(\nu_0-k+1)}Q_{i+k\nu_0}$$
yields a parameterization $\psi(t)$ defining a plane curve $C\in\textup{Top}(C_{1,\nu_0,\nu_1})$ invariant by $\omega$.
\end{proof}
We now prove the main result of this subsection.
\begin{proposition}\label{irred}
	For any $1\leq k\leq \left \lfloor \frac{\nu_0}{2}\right \rfloor$,
	there exists $C\in\textup{Top}\left(C_{1,\nu_0,\nu_1}\right)$ such that 
	$\mathfrak{s}(C)=k$.
\end{proposition}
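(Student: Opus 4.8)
The plan is to establish the two bounds $\mathfrak{s}(C)\le k$ and $\mathfrak{s}(C)\ge k$ for the curve $C$ produced by Lemma~\ref{lemma-aux}. The case $k=1$ is already covered by $C_{1,\nu_0,\nu_1}$ itself, so I would assume $2\le k\le\left\lfloor\frac{\nu_0}{2}\right\rfloor$ and $\nu_0>3$. The upper bound is immediate: since $C$ is invariant by $\omega$, one has $\mathfrak{s}(C)\le\nu(\omega)$, and reading the coefficients of \eqref{1form} one finds a $\dd x$-coefficient $-\nu_1^2x^{k-1}y$ of ordinary order $k$ and a $\dd y$-coefficient $\nu_0\nu_1 x^{k}+\nu_0(\gamma-\nu_1)y^{\nu_0-k}$ of order $\min\{k,\nu_0-k\}=k$, the equality using $k\le\nu_0-k$. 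Hence $\nu(\omega)=k$ and $\mathfrak{s}(C)\le k$.

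For the reverse inequality I would use the characterization of Cano, Corral and Senovilla-Sanz. Since the Saito module is free, $\mathfrak{s}(C)=\min\{\nu(\omega_{s+1}),\nu(\tilde\omega_{s+1})\}$ for the Saito basis $\{\omega_{s+1},\tilde\omega_{s+1}\}$ whose divisorial valuations are $t_{s+1}$ and $\tilde t_{s+1}$. The key point is the pair of identities $t_{s+1}=k\nu_0+\nu_1$ and $\tilde t_{s+1}=\nu_0(\nu_1+1-k)$. Granting them, the value $\nu_D(\omega)=k\nu_0+\nu_1$ computed just before the lemma shows that $\omega$ itself may be taken as the generator $\omega_{s+1}$ of minimal divisorial valuation, so that $\nu(\omega_{s+1})=\nu(\omega)=k$. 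For the second generator I would invoke the elementary inequality $\nu(\eta)\ge\frac{\nu_D(\eta)-\nu_1}{\nu_1}$, valid for every $1$-form because the lowest-degree monomial $x^ay^b$ of a coefficient satisfies $\nu_0a+\nu_1b\le\nu_1(a+b)$, whence $\nu_D(h)\le\nu_1\,\mathrm{ord}(h)$ for any function $h$. This yields
\[\nu(\tilde\omega_{s+1})\ge\frac{\nu_0(\nu_1+1-k)-\nu_1}{\nu_1}=\nu_0-1-\frac{\nu_0(k-1)}{\nu_1}>\nu_0-k\ge k,\]
where the strict inequality uses $\nu_0<\nu_1$ together with $k>1$, and the final one uses $k\le\nu_0-k$. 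Therefore $\nu(\tilde\omega_{s+1})>k$ and $\mathfrak{s}(C)=\min\{k,\nu(\tilde\omega_{s+1})\}=k$.

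The whole argument thus reduces to the two divisorial identities, and this is where I expect the real work to lie. The inequality $t_{s+1}\le k\nu_0+\nu_1$ is free, since $\omega$ is an invariant form of divisorial valuation $k\nu_0+\nu_1$; the content is that $C$ admits no invariant $1$-form of strictly smaller divisorial valuation, which is a moduli-dependent statement (for the monomial curve $C_{1,\nu_0,\nu_1}$ one has instead $t_{s+1}=\nu_0+\nu_1$). To prove it I would compute the $\Gamma_C$-semimodule $\Lambda_C$ of the specific analytic curve \eqref{parametrization} by the algorithm of \cite{HH-algorithm}, then run the recursion \eqref{integers} and verify that it outputs $t_{s+1}=k\nu_0+\nu_1$ and $\tilde t_{s+1}=\nu_0(\nu_1+1-k)$. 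The delicate step is to track how the prescribed moduli of $C$ — the exponents $\nu_1$, $\gamma=(\nu_0-k+1)\nu_1-k\nu_0$, and the tail fixed in Lemma~\ref{lemma-aux} — determine the minimal generators $\lambda_{-1},\dots,\lambda_s$ of $\Lambda_C$, and hence the data $u_i^{\star},t_i^{\star},t_i,\tilde t_i$. The curve was constructed precisely so that $\omega$ becomes the invariant form of least divisorial valuation, and the heart of the proof is to make this forcing explicit at the level of the semimodule.
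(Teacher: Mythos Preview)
Your plan is essentially the paper's own proof: take the curve $C$ of Lemma~\ref{lemma-aux}, compute the semimodule $\Lambda_C$ via the Hefez--Hernandes algorithm, feed the result into the recursion \eqref{integers}, and then invoke the Cano--Corral--Senovilla-Sanz theorem. Your elementary bound $\nu_D(\eta)\le\nu_1(\nu(\eta)+1)$ is exactly the contrapositive of the paper's monomial argument for $\nu(\tilde\omega)\ge k$, so at that level the two arguments coincide.

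Two points deserve correction. First, your value for $\tilde t_{s+1}$ is wrong: what you wrote, $\nu_0(\nu_1+1-k)$, is actually $u_2^{1}$, not $t_2^{1}$. The paper carries out the computation you outline and finds that $\Lambda_C$ has exactly three minimal generators
\[
\lambda_{-1}=\nu_0,\qquad \lambda_0=\nu_1,\qquad \lambda_1=(\nu_0-k+1)\nu_1-(k-1)\nu_0,
\]
so $s=1$, and the recursion \eqref{integers} then gives $t_2=k\nu_0+\nu_1$ and $\tilde t_2=k\nu_1+\nu_0$. With the correct value your inequality still does the job: $\nu(\tilde\omega_{s+1})\ge\dfrac{k\nu_1+\nu_0-\nu_1}{\nu_1}=k-1+\dfrac{\nu_0}{\nu_1}$, which forces $\nu(\tilde\omega_{s+1})\ge k$ by integrality. (Note you only get $\ge k$, not $>k$; that is all you need.)

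Second, the step you flag as ``the real work'' --- determining $\Lambda_C$ for the specific curve \eqref{parametrization} --- is indeed the heart of the proof and cannot be left as a black box. The paper runs the algorithm of \cite{HH-algorithm} explicitly: it checks that $\nu(\psi^*(\nu_0x\,\dd y-\nu_1y\,\dd x))=\lambda_1$ produces a genuine new generator, and then verifies that the two candidate $1$-forms $\omega_2^{(1)},\omega_2^{(2)}$ for a fourth generator fail --- the first because it is precisely $\omega$ and hence vanishes on $C$, the second because every integer beyond $(\nu_1-k+1)\nu_0$ already lies in $\Lambda_1$. This last arithmetic check (that $s=1$) is what makes the argument go through, and it is the piece your proposal still owes.
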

\begin{proof}
If $k=1$ then considering $C_{1,\nu_0,\nu_1}$ we get $\mathfrak{s}(C_{1,\nu_0,\nu_1})=1$. Therefore, suppose that $\nu_0>3$ and let
$k$ be an integer in $\left\{ 2,\ldots,\left \lfloor \frac{\nu_0}{2}\right \rfloor\right\} $.
Let $C\in\textup{Top}(C_1,\nu_0,\nu_1)$ be the curve characterized in the previous lemma with parametrization $\psi(t)$ as (\ref{parametrization}). 

We will compute the minimal generators $\{\lambda_{-1},\lambda_0,\ldots ,\lambda_s\}$ for $\Lambda_C$ using the algorithm developed in \cite{HH-algorithm}. The first two generators are written 
\begin{align*}
\lambda_{-1}&=\nu(\psi^*(\textup{d}x))=\nu_0,\\
\lambda_{0}&=\nu(\psi^*(\textup{d}y))=\nu_1.    
\end{align*} The next generator of $\Lambda_C$ will be equal to the valuation $\nu(\psi^*(\omega_1^{(i)}))$ where $$\nu(\psi^*(\omega_1^{(i)}))\not\in(\Gamma_C+\nu_0)\cup (\Gamma_C+\nu_1)\subset\Gamma_C$$ for $i\in\{1,2\}$ and
\begin{align*}
\omega_1^{(1)}&=\nu_0x\textup{d}y-\nu_1y\textup{d}x+h_{11}\textup{d}x+h_{12}\textup{d}y \\ \omega_1^{(2)}&=\nu_0y^{\nu_0-1}\textup{d}y-\nu_1x^{\nu_1-1}\textup{d}x+h_{21}\textup{d}x+h_{22}\textup{d}y.
\end{align*}
The function $h_{ij}\in\mathbb{C}\{x,y\}$ will satisfy furthermore
\begin{align*}
\nu(\psi^*(h_{11}\textup{d}x+h_{12}\textup{d}y))&\geq\nu(\psi^*(\nu_0x\textup{d}y-\nu_1y\textup{d}x))\\ \nu(\psi^*(h_{21}\textup{d}x+h_{22}\textup{d}y))&\geq\nu(\psi^*(\nu_0y^{\nu_0-1}\textup{d}y-\nu_1x^{\nu_1-1}\textup{d}x)).
\end{align*}
The valuation of $\psi^*(\nu_0x\textup{d}y-\nu_1y\textup{d}x)$ is equal to 
$$(\nu_0-k+1)\nu_1-(k-1)\nu_0$$ and does not belong to $(\Gamma_C+\nu_0)\cup (\Gamma_C+\nu_1)$. Moreover, we get $$\nu(\psi^*(\nu_0y^{\nu_0-1}\textup{d}y-\nu_1x^{\nu_1-1}\textup{d}x))>\nu_0\nu_1.$$
Therefore, we observe that $$\nu(\psi^*(\omega_1^{(2)}))\in(\Gamma_C+\nu_0)\cup(\Gamma_C+\nu_1)$$ for any $h_{21},h_{22}\in\mathbb{C}\{x,y\}$. Thus, denoting $\omega_1=\nu_0x\textup{d}y-\nu_1y\textup{d}x$, we obtain one more minimal generator for $\Lambda_C$ as
$$\lambda_1=\nu(\psi^*(\omega_1))=\gamma+\nu_0=(\nu_0-k+1)\nu_1-(k-1)\nu_0.$$
Beyond $\lambda_1$ the next possible minimal generator for $\Lambda_C$ is obtained considering
\begin{align*}
\omega_2^{(1)}&=\nu_1x^{k-1}\omega_1+\nu_0(\gamma-\nu_1)y^{\nu_0-k}\textup{d}y+h_{11}\textup{d}x+h_{12}\textup{d}y+h_{13}\omega_1\\ 
\omega_2^{(2)}&=y^{k-1}\omega_1+(\gamma-\nu_1)x^{\nu_1-k}\textup{d}x+h_{21}\textup{d}x+h_{22}\textup{d}y+h_{23}\omega_1
\end{align*}
with $h_{ij}\in\mathbb{C}\{x,y\}$ and 
\begin{align*}
\nu(\psi^*(h_{11}\textup{d}x+h_{12}\textup{d}y+h_{13}\omega_1))&\geq\nu(\psi^*(\nu_1x^{k-1}\omega_1+\nu_0(\gamma-\nu_1)y^{\nu_0-k}\textup{d}y))\\ \nu(\psi^*(h_{21}\textup{d}x+h_{22}\textup{d}y+h_{23}\omega_1))&\geq\nu(\psi^*(y^{k-1}\omega_1+(\gamma-\nu_1)x^{\nu_1-k}\textup{d}x)).
\end{align*} 
Notice that $\nu_1x^{k-1}\omega_1+\nu_0(\gamma-\nu_1)y^{\nu_0-k}\textup{d}y$ is precisely the $1$-form $\omega$ given in (\ref{1form}). This implies that $\psi^*(\omega)=0$ ; so $\omega_2^{(1)}$ does not produce any new minimal generator for $\Lambda_C$. 

On the other hand, we remark that any integer $n$ such that $$n\geq \nu(\omega_2^{(2)})>(\nu_1-k+1)\nu_0$$ belongs to $\bigcup_{i=-1}^{1}(\Gamma_C+\lambda_i)$. Indeed, any integer $n$ can be uniquely expressed as $n=\alpha\nu_1+\beta\nu_0$ with $0\leq\alpha<\nu_0$ and $\beta\in\mathbb{Z}$. If $\beta\geq 0$ then $n$ belongs to $(\Gamma_C+\nu_0)\cup (\Gamma_C+\nu_1)$. If $\beta<0$ then the condition $n>(\nu_1-k+1)\nu_0$ implies $\alpha\leq \nu_0-k$ and $k-1+\beta>1$. Consequently, there exists $\delta\in\mathbb{N}$ such that
\begin{align*}
n&=\alpha\nu_1+\beta\nu_0\\ &=\delta\nu_1+(k-1+\beta)\nu_0+(\nu_0-k+1)\nu_1-(k-1)\nu_0\in\Gamma_C+\lambda_1.  
\end{align*}
So, $\omega_2^{(2)}$ does not produce any new minimal generator for $\Lambda_C$ and we conclude that
the minimal generators for $\Lambda_C$ are $$\{\lambda_{-1}=\nu_0,\ \ \lambda_{0}=\nu_1,\ \ \lambda_1=\gamma+\nu_0=(\nu_0-k+1)\nu_1-(k-1)\nu_0\}.$$
 
Computing the integers introduced at (\ref{integers}) we obtain $t_0=\nu_1$ and
$$\begin{array}{lll}
	u_1^{0}=\nu_0+\nu_1 & & u_1^{1}=\nu_0\nu_1 \\
 t_1^{0}=t_1=\nu_0+\nu_1 & & t_1^{1}=\tilde{t}_1=\nu_0\nu_1 \\
	& & \\
	u_2^{0}=(\nu_0-k+1)\nu_1 & & u_2^{1}=(\nu_1-k+1)\nu_0 \\
	t^{0}_2=t_2=k\nu_0+\nu_1 & & t^{1}_2=\tilde{t}_2=k\nu_1+\nu_0.
\end{array}$$

	Therefore, we get $\nu_{D}\left(\omega\right)=k\nu_0+\nu_1=t_{2}$. By Theorem \ref{theo-CanoCorral}, there exists a differential $1$-form $\tilde{\omega}$ with $\nu_{D}\left(\tilde{\omega}\right)=\tilde{t}_{2}=k\nu_1+\nu_0$ such that $\{\omega,\tilde{\omega}\}$ is a Saito basis for $C$.

	Suppose that in the Taylor expansion of $\tilde{\omega}$ there exists a term
	of the form 
	\[
	a_{ij}x^{i}y^{j}\textup{d}x\ \ \textup{ or }\ \ a_{ij}x^{i}y^{j}\textup{d}y
	\]
	such that $a_{ij}\neq 0$ and $\nu\left(x^{i}y^{j}\right)=i+j\leq k-1$. Then, we can see that 
	\[
	\max\{\nu_{D}\left(x^{i}y^{j}\textup{d}x\right ),\nu_{D}\left(x^{i}y^{j}\textup{d}y\right)\}\leq \left(i+j+1\right)\nu_1\leq k\nu_1<k\nu_1+\nu_0=\tilde{t}_{2}
	\]
	which is impossible. Therefore, we get 
	$\nu\left(\tilde{\omega}\right)\geq k$
	and 
	\[
	\mathfrak{s}(C)=\nu(\omega)=k.
	\]
\end{proof}

\subsection{Saito numbers in the topological class $\textup{Top}(C_{N,1,1})$}
Among the curves $C_{N,\nu_0,\nu_1}$, the curves $C_{N,1,1}$ with $N\geq 3$ are the only radial ones, that is, the maximum Saito number is generically realized by a dicritical differential $1-$form \cite{genzmer2020saito}. 

Denoting by $\mathfrak{M}_N$ the number $\max_{C\in\textup{Top}\left(C_{N,1,1}\right)}\mathfrak{s}(C)$, we get 
\[
\mathfrak{M}_N=\left\{ \begin{array}{rl}
0 & N=1\\
1 & N=2,3,4\\
\frac{N-1}{2} & N\geq5\textup{ and \ensuremath{N} odd}\\
\frac{N}{2}-1 & N\geq5\textup{ and \ensuremath{N} even.}
\end{array}\right.
\]

\begin{proposition}
For any $N>1$ and any $k$ in $\left\{ 1,\ldots,\mathfrak{M}_N\right\} $,
there exists $C$ in $\textup{Top}\left(C_{N,1,1}\right)$ such that 
\[
\mathfrak{s}(C)=k.
\]
\end{proposition}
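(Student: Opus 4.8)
The plan is to realize each value $k$ by padding a \emph{generic} configuration of few branches with additional leaves of a single dicritical foliation, and then to extract the lower bound for free from the monotonicity of $\mathfrak{s}$ under passing to sub-curves. First I record two elementary reductions. Since any $C'\in\textup{Top}(C_{N,1,1})$ is desingularized by a single blow-up, its dual graph has one vertex $s$ with $\rho(s)=1$ and $n_s(C')=N$; by Proposition \ref{Hertling} a foliation $\mathcal{F}$ leaving $C'$ invariant satisfies $\nu(\mathcal{F})=-1+s(2)\ge N-1$ when $s$ is white (each of the $N$ attaching points contributes $\textup{Ind}\ge 1$), and $\nu(\mathcal{F})=1+\sum_{p\in s}\textup{Tan}(E^\star\mathcal{F},s,p)$ when $s$ is black. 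As $\mathfrak{M}_N\le N/2<N-1$ for $N\ge 3$, only dicritical foliations are relevant once $k\le\mathfrak{M}_N$; and such a foliation of multiplicity $m$ is generically transverse to $E\cong\mathbb{P}^1$, hence carries a smooth invariant branch transverse to $E$ through every point of $E$ outside its finite tangency-and-singular locus, i.e.\ a continuum of invariant smooth branches with pairwise distinct tangents. Second, if $C''\subseteq C'$ is a sub-union of branches then every foliation leaving $C'$ invariant leaves $C''$ invariant, so $\mathfrak{s}(C')\ge\mathfrak{s}(C'')$.

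The case $k=1$ is settled by $C_{N,1,1}$ itself, for which the radial foliation $y\,\textup{d}x-x\,\textup{d}y$ gives $\mathfrak{s}(C_{N,1,1})=1$. So fix $2\le k\le\mathfrak{M}_N$; then $N\ge 5$, and a short inspection of the formula for $\mathfrak{M}_N$ gives $2k+1\le N$ in both the odd and even cases. I would begin from a \emph{generic} configuration $C'=B_1\cup\cdots\cup B_{2k+1}$ of $2k+1$ smooth, pairwise transverse branches. Because $2k+1\ge 5$ is odd, the generic value of the Saito number in $\textup{Top}(C_{2k+1,1,1})$ is $\mathfrak{M}_{2k+1}=\tfrac{(2k+1)-1}{2}=k$, attained by $C'$; since $C_{2k+1,1,1}$ is radial, \cite{genzmer2020saito} guarantees this extremal value is realized by a \emph{dicritical} foliation $\mathcal{F}_k$ of multiplicity exactly $k$. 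Using the continuum of invariant smooth transverse branches of $\mathcal{F}_k$, I then adjoin $N-(2k+1)$ further leaves $B_{2k+2},\ldots,B_N$ with new pairwise distinct tangent directions, avoiding the finitely many tangency and singular directions of $\mathcal{F}_k$ on $E$. Setting $C_k=B_1\cup\cdots\cup B_N$ produces a curve of $\textup{Top}(C_{N,1,1})$, being $N$ smooth branches with pairwise distinct tangents.

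It remains to evaluate $\mathfrak{s}(C_k)$. By construction $\mathcal{F}_k$ leaves every $B_i$ invariant, so $\mathfrak{s}(C_k)\le\nu(\mathcal{F}_k)=k$; for the reverse inequality, monotonicity under sub-curves gives $\mathfrak{s}(C_k)\ge\mathfrak{s}(B_1\cup\cdots\cup B_{2k+1})=\mathfrak{s}(C')=k$. Hence $\mathfrak{s}(C_k)=k$, which exhausts the range $\{1,\ldots,\mathfrak{M}_N\}$.

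The main obstacle I anticipate is not the inequalities, which follow formally once the ingredients are in place, but justifying the two structural inputs cleanly: that the extremal foliation attaining $\mathfrak{M}_{2k+1}$ can be taken \emph{dicritical} (so that its leaves supply a continuum available for padding), and that such a dicritical $\mathcal{F}_k$ genuinely carries a smooth transverse invariant branch through a generic point of $E$. The first is precisely the \emph{radiality} of $C_{2k+1,1,1}$ recorded in \cite{genzmer2020saito}; the second is a local transversality statement for $E^\star\mathcal{F}_k$ away from its finite tangency-and-singular locus, where the only care needed is to keep the adjoined tangent directions pairwise distinct and disjoint from the original $2k+1$ directions. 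Granting these, the monotonicity of $\mathfrak{s}$ delivers the lower bound with no dimension count, which is exactly what keeps the argument short.
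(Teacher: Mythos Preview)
Your argument is correct and takes a genuinely different route from the paper's. The paper constructs an explicit Saito basis: starting from $\{\omega_1,\omega_2\}=\{y\,\textup{d}x-x\,\textup{d}y,\ \omega_1+\textup{d}(x^{N-2k+2}+y^{N-2k+2})\}$ for the curve $\{x^{N-2k+2}+y^{N-2k+2}=0\}$, it multiplies each $\omega_i$ by $k-1$ leaves of the \emph{other} $\omega_j$ and reads $\mathfrak{s}(C)=k$ directly off the resulting Saito criterion. You instead take a generic $(2k{+}1)$-branch curve $C'$ with $\mathfrak{s}(C')=\mathfrak{M}_{2k+1}=k$, pad it by $N-2k-1$ extra leaves of a \emph{single} dicritical foliation realizing this value, and obtain the lower bound from the sub-curve monotonicity $\mathfrak{s}(C_k)\ge\mathfrak{s}(C')$. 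Your approach is shorter and avoids any Saito-criterion bookkeeping, at the price of invoking the value of $\mathfrak{M}_{2k+1}$ from \cite{genzmer2020saito} as a black box; the paper's approach is self-contained and fully constructive. Incidentally, your stated worry about justifying dicriticality is already settled by your own preliminary observation: any foliation of multiplicity $k<2k=(2k{+}1)-1$ leaving $C'$ invariant is forced to be dicritical by the Hertling bound, so you do not actually need radiality for that step.
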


\begin{proof}
If $k=\mathfrak{M}_N$ or if $N=2,3$ or $4$ the statement is obvious. Suppose that $N\geq 5$ and let
$k$ be an integer in $\left\{ 1,\ldots,\mathfrak{M}_N-1\right\} $.
Let us consider the differential $1-$forms
\[
\omega_1=y\dd{x}-x\dd{y},~\omega_2=\omega_1+\dd f
\]
where $f$ is the function 
\[
f=x^{N-2k+2}+y^{N-2k+2}.
\]
The couple $\left\{ \omega_1, \omega_2\right\} $ is a Saito basis for $\{f=0\}$ since it
satisfies the Saito criterion, 
\[
\omega_1\wedge \omega_2=(N-2k+2)f\dd x\wedge\dd y.
\]
Since $N\geq 5$, the multiplicity of $\dd f$ is bigger than $2$ and after one blowing-up, both $\omega_i$'s are dicritical. Let us consider $l_{1}^{(i)}=0,\ldots,l_{k-1}^{(i)}=0$ be
$k-1$ smooth and transversal curves tangent to $\omega_{i}.$ We suppose
moreover that these curves are transversal at a whole and transversal
to $f=0.$ Now writing 
\begin{equation}
\prod_{i=1}^{k-1}l_{i}^{(2)}\omega_{1}\wedge\prod_{i=1}^{k-1}l_{i}^{(1)}\omega_{2}={(N-2k+2)}\prod_{i=1}^{k-1}l_{i}^{(2)}l_{i}^{(1)}f\dd x\wedge \dd y\label{eq:saito}
\end{equation}
yields a Saito relation for the curve $C=\left\{ \prod_{i=1}^{k-1}l_{i}^{(2)}l_{i}^{(1)}f=0\right\} $,
a curve which consists in the union of $N$ smooth and transversal
curves. Thus, $C$ is equisingular to $C_{N,1,1}$ and, following
(\ref{eq:saito}), one has 
\[
\mathfrak{s}\left(C\right)=k.
\]
\end{proof}

\subsection{Saito numbers in the topological class $\textup{Top}(C_{N,\nu_0,\nu_1})$. }

Let $C_{N,\nu_{0},\nu_{1}}$ be the curve given by the equation
\[f_{N,\nu_{0},\nu_{1}}=y^{N\nu_{0}}-x^{N\nu_{1}}=0\]
where $N>0$ and $\nu_{0}\leq\nu_{1}$ are relatively prime. Considering the divisorial valuation defined in (\ref{divisorial-function}) we get the following result.

\begin{lemma}\label{lemma-dicritical}
Let $\omega$ be the $1-$form be defined by 
$\omega=\nu_{1}y\textup{d}x-\nu_{0}x\textup{d}y$.
Suppose that $h=x^{i}y^{j}$. If $\nu_{D}\left(h\right)\leq N\nu_{0}\nu_{1}-1-\nu_{0}-\nu_{1}$
then $\textup{d}f_{N,\nu_0,\nu_1}+h\omega$ is dicritical along the central component of the desingularization process of $C_{N,\nu_0,\nu_1}$.
\end{lemma}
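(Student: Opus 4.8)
The plan is to read off the behaviour along the central component from the weighted blowing-up adapted to the weights $(\nu_0,\nu_1)$, which is exactly the one realizing the divisorial valuation $\nu_D$. First I would record that both $f_{N,\nu_0,\nu_1}$ and $\omega$ are weighted-homogeneous for these weights, of respective weighted degrees $N\nu_0\nu_1$ and $\nu_0+\nu_1$; consequently $\textup{d}f_{N,\nu_0,\nu_1}$ is weighted-homogeneous of degree $N\nu_0\nu_1$ as a $1$-form, while $h\omega$ has weighted degree $\nu_D(h)+\nu_0+\nu_1$. I would then work in the chart $\pi:(u,v)\mapsto(x,y)=(u^{\nu_0},u^{\nu_1}v)$ of this weighted blowing-up, whose exceptional divisor $\{u=0\}$ is the central component. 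A direct computation gives $\pi^{\star}\omega=-\nu_0\,u^{\nu_0+\nu_1}\,\textup{d}v$, so that, after removing the exceptional factor, $\omega$ is transverse to $\{u=0\}$: the form $\omega$ is already dicritical along the central component, and the whole point is that adding $\textup{d}f_{N,\nu_0,\nu_1}$ does not spoil this.

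Next I would pull back the full form $\eta=\textup{d}f_{N,\nu_0,\nu_1}+h\omega$. Writing $h=x^{i}y^{j}$, so that $\pi^{\star}h=u^{\nu_D(h)}v^{j}$, the pullback reads
\begin{align*}
\pi^{\star}\eta &= N\nu_0\nu_1\,u^{N\nu_0\nu_1-1}(v^{N\nu_0}-1)\,\textup{d}u\\
&\quad +\bigl(N\nu_0\,u^{N\nu_0\nu_1}v^{N\nu_0-1}-\nu_0\,u^{\nu_D(h)+\nu_0+\nu_1}v^{j}\bigr)\,\textup{d}v.
\end{align*}
Setting $m=\nu_D(h)+\nu_0+\nu_1$, the hypothesis $\nu_D(h)\leq N\nu_0\nu_1-1-\nu_0-\nu_1$ reads $m\leq N\nu_0\nu_1-1<N\nu_0\nu_1$. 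Thus every power of $u$ appearing in $\pi^{\star}\textup{d}f_{N,\nu_0,\nu_1}$ is at least $N\nu_0\nu_1-1\geq m$, and in particular the coefficient of $\textup{d}v$ coming from $\textup{d}f_{N,\nu_0,\nu_1}$ sits at power $N\nu_0\nu_1>m$, whereas the contribution of $h\omega$ to the coefficient of $\textup{d}v$ sits precisely at the power $u^{m}$.

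Finally I would saturate: dividing $\pi^{\star}\eta$ by $u^{m}$ and restricting to $\{u=0\}$, the coefficient of $\textup{d}v$ becomes exactly $-\nu_0 v^{j}$, since $N\nu_0\nu_1-m\geq 1$ kills the $\textup{d}f_{N,\nu_0,\nu_1}$ term; this is not identically zero, i.e.\ $u$ does not divide the coefficient of $\textup{d}v$. By the standard criterion, $\{u=0\}$ is invariant by $P\,\textup{d}u+Q\,\textup{d}v$ if and only if $u\mid Q$; hence $\{u=0\}$ is not invariant and $\eta$ is dicritical along the central component, exactly as $\omega$ was. The delicate point, and the one I would treat with the most care, is the identification of the exceptional divisor of the weighted blowing-up with the central component of the minimal (point) desingularization, together with the equivariance under the action $(u,v)\mapsto(\zeta u,\zeta^{-\nu_1}v)$, $\zeta^{\nu_0}=1$, under which $f_{N,\nu_0,\nu_1}$, $\omega$ and the monomial $h$ all behave equivariantly, so that the non-invariance computed above descends to the quotient. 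Everything else is the explicit pullback displayed above.
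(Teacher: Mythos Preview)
Your argument is correct and follows the same underlying strategy as the paper---pull back the $1$-form, saturate along the exceptional divisor, and observe that the $h\omega$ term survives in the transverse direction while the $\textup{d}f$ term does not. The difference is only in the chart. The paper works directly in local coordinates $(x_D,y_D)$ of the minimal resolution, obtained via a B\'ezout relation $u\nu_0-v\nu_1=1$, where $y_D=0$ is literally the central component; there $\omega$ pulls back to a multiple of $\textup{d}x_D$ and the non-invariance is read off from the $\textup{d}x_D$ coefficient. You instead use the Newton--Puiseux type substitution $(u,v)\mapsto(u^{\nu_0},u^{\nu_1}v)$, which is a $\nu_0$-sheeted branched cover of a chart of the weighted blowing-up; your formulas are cleaner, but you pay for this with the final descent step through the $\mathbb{Z}/\nu_0$-action that you flag as delicate. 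That step is indeed harmless---the action is free on a generic point of $\{u=0\}$ since $\gcd(\nu_0,\nu_1)=1$, so transversality descends---and once made precise your proof is complete. The paper's chart avoids this descent entirely, at the cost of carrying the B\'ezout exponents through the computation.
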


\begin{proof}
Let $u,v$ such that $u\nu_{0}-v\nu_{1}=1$. The local coordinates
in the neighborhood of the central component $D$ can be written
\[
x=x_{D}^{\nu_{0}-v}y_{D}^{\nu_{0}}\quad y=x_{D}^{\nu_{1}-u}y_{D}^{\nu_{1}}.
\]
$y_{D}=0$ being a local equation for $D$ \cite{PaulGen}. Computing the pullback
of $\textup{d}f_{N,\nu_0,\nu_1}+h\omega$ in these coordinates yields 
\begin{align*}
y_{D}^{N\nu_{0}\nu_{1}-1}\left(y_{D}\left(\cdots\right)\textup{d}x_{D}+x_{D}\left(\cdots\right)\textup{d}y_{D}\right)\\
+h\left(x_{D}^{\nu_{0}-v}y_{D}^{\nu_{0}},x_{D}^{\nu_{1}-u}y_{D}^{\nu_{1}}\right)y_{D}^{\nu_{0}+\nu_{1}}x_{D}^{\nu_{0}-v+\nu_{1}-u-1}\textup{d}x_{D}.
\end{align*}
The hypothesis of the lemma ensures that the above $1-$form
can be exactly divided by $y_{D}^{\nu_{D}\left(h\right)+\nu_{0}+\nu_{1}}$
and the $1-$form 
\begin{align*}
y_{D}^{N\nu_{0}\nu_{1}-1-\nu_{0}-\nu_{1}-\nu_{D}\left(h\right)}\left(y_{D}\left(\cdots\right)\textup{d}x_{D}+x_{D}\left(\cdots\right)\textup{d}y_{D}\right)\\
+\frac{h\left(x_{D}^{\nu_{0}-v}y_{D}^{\nu_{0}},x_{D}^{\nu_{1}-u}y_{D}^{\nu_{1}}\right)}{y_{D}^{\nu_{D\left(h\right)}}}x_{D}^{\nu_{0}-v+\nu_{1}-u-1}\textup{d}x_{D}
\end{align*}
is generically transverse to $y_{D}=0.$
\end{proof}
We are now in position to prove Theorem \ref{THEO2} stated in the introduction.
\begin{theorem*}
Let $C_{N,\nu_0,\nu_1}$ be the curve defined by 
\[
y^{N\nu_{0}}-x^{N\nu_{1}}=0
\]
where $N>0$ and $\nu_{0}\leq\nu_{1}$ are relatively prime with $N\nu_0>1$. Then for any {$k\in\{1,\ldots ,\left[\frac{N\nu_{0}}{2}\right]\}$}
there exist $C\in\textup{Top}\left(C_{N,\nu_0,\nu_1}\right)$ such that 
\[
\mathfrak{s}(C)=k.
\]
\end{theorem*}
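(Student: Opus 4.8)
The plan is to realise each intermediate value by a single explicit dicritical foliation, to obtain the upper bound for free, and then to extract the lower bound from a divisorial valuation estimate modelled on the proof of Proposition~\ref{irred}. Since the value $k=1$ is reached by the Saito basis $\{\omega,\textup{d}f_{N,\nu_0,\nu_1}\}$ with $\omega=\nu_1 y\,\textup{d}x-\nu_0 x\,\textup{d}y$, I may focus on $2\le k\le\lfloor N\nu_0/2\rfloor$. I also assume $\nu_0<\nu_1$: the remaining possibility $\nu_0=\nu_1=1$ is exactly $\textup{Top}(C_{N,1,1})$, already settled in the previous subsection, where the maximum is strictly smaller than $\lfloor N/2\rfloor$, so the two regimes genuinely differ.

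For such a $k$ I would set $\eta_k=\textup{d}f_{N,\nu_0,\nu_1}+x^{k-1}\omega$. Since $\nu(x^{k-1}\omega)=k$ while $\nu(\textup{d}f_{N,\nu_0,\nu_1})=N\nu_0-1\ge k$, the lowest-order part of $\eta_k$ is that of $x^{k-1}\omega$, so $\nu(\eta_k)=k$; a weighted computation gives $\nu_{D}(\eta_k)=k\nu_0+\nu_1$. As $\nu_{D}(x^{k-1})=(k-1)\nu_0\le N\nu_0\nu_1-1-\nu_0-\nu_1$ throughout the range $k\le\lfloor N\nu_0/2\rfloor$ (using $\nu_0<\nu_1$), Lemma~\ref{lemma-dicritical} shows that $\eta_k$ is dicritical along the central component $D$. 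Hence $\eta_k$ carries a one-parameter family of smooth invariant branches transverse to $D$, each with Puiseux pair $(\nu_0,\nu_1)$; choosing $N$ of them attached to $D$ at distinct points yields a curve $C_k\in\textup{Top}(C_{N,\nu_0,\nu_1})$ left invariant by $\eta_k$, whence $\mathfrak{s}(C_k)\le\nu(\eta_k)=k$.

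The core of the argument is the reverse inequality $\mathfrak{s}(C_k)\ge k$, which is analytic and cannot be deduced from the topological lower bound of Algorithm~1 (that bound is merely $1$). I would complete $\eta_k$ to a Saito basis $\{\eta_k,\theta_k\}$ of $\Omega^1(\log C_k)$ and show that $\nu_{D}(\theta_k)>k\nu_1$. Granting this, the monomial comparison of Proposition~\ref{irred} applies verbatim: any term $x^iy^j\,\textup{d}x$ or $x^iy^j\,\textup{d}y$ of $\theta_k$ with $i+j\le k-1$ has divisorial valuation at most $\nu_1(i+j+1)\le k\nu_1<\nu_{D}(\theta_k)$, which is incompatible with $\nu_{D}(\theta_k)$ being the minimum over the terms of $\theta_k$. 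Thus every term of $\theta_k$ satisfies $i+j\ge k$, so $\nu(\theta_k)\ge k$ and $\mathfrak{s}(C_k)=\min\{\nu(\eta_k),\nu(\theta_k)\}=k$.

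The main obstacle is precisely the estimate $\nu_{D}(\theta_k)>k\nu_1$. The Saito relation $\eta_k\wedge\theta_k=u\,f_{C_k}\,\textup{d}x\wedge\textup{d}y$ with $u$ a unit only yields the \emph{upper} bound $\nu_{D}(\theta_k)\le N\nu_0\nu_1-(k-1)\nu_0$, so a genuine lower bound on $\nu_{D}(\theta_k)$ is needed, and the one-Puiseux-pair machinery of Cano, Corral and Senovilla-Sanz behind Proposition~\ref{irred} is not available for the $N$ branches of $C_k$. I expect to obtain it from the dicritical geometry: the leading $\nu_{D}$-part of $\eta_k$ is exactly $x^{k-1}\omega$, a multiple of $\omega$, so a cancellation forcing $\nu_{D}(\theta_k)$ to drop below $N\nu_0\nu_1-(k-1)\nu_0$ would require the leading $\nu_{D}$-part of $\theta_k$ to be proportional to $\omega$ as well, that is, essentially $\omega\in\Omega^1(\log C_k)$. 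Since the branches of $C_k$ are the curved leaves of $\eta_k$ and not the leaves $y^{\nu_0}=c\,x^{\nu_1}$ of $\omega$, this does not happen when $\nu_0<\nu_1$; when $\nu_0=\nu_1$ the divisorial and ordinary valuations coincide, the estimate degenerates, and indeed $C_{N,1,1}$ must be treated separately. Ruling out this cancellation rigorously is the delicate point of the proof.
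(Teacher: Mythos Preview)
Your approach has a genuine gap at exactly the point you flag. You construct $\eta_k$ of multiplicity $k$ tangent to a curve $C_k \in \textup{Top}(C_{N,\nu_0,\nu_1})$, giving $\mathfrak{s}(C_k) \leq k$, but the reverse inequality remains unproved: the heuristic that a low $\nu_D(\theta_k)$ would force the $\nu_D$-leading part of $\theta_k$ to be proportional to $\omega$, and hence somehow $\omega \in \Omega^1(\log C_k)$, does not stand up. Even if the $\nu_D$-initial part of $\theta_k$ equals $g\omega$ for some weighted-homogeneous $g$, this says nothing about $\omega$ itself lying in the Saito module of $C_k$ --- you cannot subtract $g\omega$ and stay inside $\Omega^1(\log C_k)$ unless $\omega$ is already there, which is precisely what your choice of branches is designed to prevent. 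And the Saito relation, as you observe, bounds $\nu_D(\theta_k)$ only from above. Without a substitute for the Cano--Corral--Senovilla-Sanz machinery in this reducible setting, nothing in your argument excludes an invariant $1$-form of multiplicity strictly less than $k$ for your specific $C_k$; indeed, even the preliminary step of completing $\eta_k$ to a Saito basis already presupposes $\mathfrak{s}(C_k)\ge k$.

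The paper's proof sidesteps this difficulty entirely by never having to estimate an unknown second basis element. It proceeds by induction on $N$ and at each step produces \emph{both} elements of a Saito basis explicitly via a multiplicative trick: if $\{\omega,\eta\}$ is a Saito basis for $\{f=0\}$ with, say, $\omega$ dicritical along the central component, and $f_1=\cdots=f_r=0$ are leaves of $\omega$ transverse to $f=0$, then $\{\omega,\,f_1\cdots f_r\,\eta\}$ satisfies Saito's criterion for $\{f f_1\cdots f_r=0\}$; a symmetric variant $\{f_\eta\omega,\,f_\omega\eta\}$ adds one branch on each side. Starting from the irreducible case (Proposition~\ref{irred}) and from the explicit pair $\{\omega,\,\textup{d}f+x^{k-1}\omega\}$ on a small quasi-homogeneous model, every $(N,k)$ is reached with the valuations of both generators known by construction, and $\mathfrak{s}(C)=k$ is read off directly. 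What your argument lacks is precisely this: an explicit second generator, not an a priori bound on an abstract one.
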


\begin{proof}
For $N=1$ the result follows from Proposition \ref{irred}. Suppose $N\geq 3$ and the result is true for any curve $C_{N^{\prime},\nu_0,\nu_1}$ with $1\leq N^{\prime}<N$. Let {$k\in\{1,\ldots ,\left[\frac{N\nu_{0}}{2}\right]\}$}. For now,
suppose that $k\geq\nu_{0}+1.$ Let us consider $C^\prime\in\textup{Top}(C_{N-2},\nu_0,\nu_1)$
such that $\mathfrak{s}(C^\prime)=k-\nu_{0}$ and let $\left\{ \omega,\eta\right\} $
be a Saito basis for $C^\prime$. According to the Saito criterion, if $f$
is a reduced equation of $C^\prime$ then 
\[
\omega\wedge\eta=uf\textup{d}x\wedge\textup{d}y
\]
{where $u\in\mathbb{C}\{x,y\}$ is a unit.} Both $1-$forms $\omega$ and $\eta$ can be supposed of multiplicity
$k-\nu_{0}$ and are dicritical along the central component {of the desingularization process of $C'$, that is the same of $C_{N-2}$}. Consider
$f_{\omega}$ and $f_{\eta}$ two reduced equations of analytical
curves invariants for respectively $\omega$ and $\eta$ such that
after desingularization, $f_{\omega}=0$ and $f_{\eta}=0$ are attached
to the common central component, transversal the one to the other and both transversal to $C_{N-2}$. By construction, these two curves
are equisingular to $y^{\nu_{0}}-x^{\nu_{1}}=0.$ Now, we can write
\[
f_{\eta}\omega\wedge f_{\omega}\eta=uf_{\omega}f_{\eta}f\textup{d}x\wedge\textup{d}y.
\]
Since $f_{\eta}\omega$ and $f_{\omega}\eta$ leave both invariant
the curve $ff_{\eta}f_{\omega}=0$, the relation above is the Saito
criterion for $f_{\omega}f_{\eta}f=0.$ Setting $C=\left\{ f_{\omega}f_{\eta}f=0\right\} \in\textup{Top}\left(C_{N,\nu_0,\nu_0}\right),$
we obtain 
\[
\mathfrak{s}(C)=\min\left\{\nu\left(f_{\eta}\omega\right),\nu\left(f_{\omega}\eta\right)\right\}=\nu_0+k-\nu_{0}=k.
\]

Suppose now that, $k\leq\left[\frac{\nu_{0}}{2}\right]$. The initial assumptions ensure that $\nu_0\geq 2$. Applying
the result for $N=1$ yields a curve $C^\prime=\left\{ f=0\right\} \in\textup{Top}\left(C_{1,\nu_0,\nu_1}\right)$
such that $\mathfrak{s}(C^\prime)=k$ and a Saito basis $\left\{ \omega,\eta\right\}$
with $\nu\left(\omega\right)=k\leq\nu\left(\eta\right)$ for $C^{\prime}$ such that the $1-$form $\omega$ is dicritical along the central component {of the desingularization process of $C'$, that is the same of $C_{1,\nu_0,\nu_1}$}.
Choose $N-1$ curves $f_{1}=0,\ldots,$$f_{N-1}=0$ invariant for
$\omega$ attached to the central component {and transversal to} $C^\prime$.
From the Saito criterion {$\omega\wedge\eta=uf\textup{d}x\wedge\textup{d}y$ for $C^\prime$}, we can write 
\[
\omega\wedge f_{1}\cdots f_{N-1}\eta=uf_{1}\cdots f_{N-1}f\textup{d}x\wedge\textup{d}y
\]
which is the Saito criterion for $f_{1}\cdots f_{N-1}f=0$. As a consequence,
setting $C=\left\{ f_{1}\cdots f_{N-1}f=0\right\} ,$ we
get $C\in\textup{Top}\left(C_{N},\nu_0,\nu_1\right)$ and 
\[
\mathfrak{s}(C)=\min\left\{\nu\left(\omega\right),\nu\left(f_{1}\cdots f_{N-1}\eta\right)\right\}=k.
\]
Finally, suppose that {$k\in\{\left[\frac{\nu_{0}}{2}\right]+1,\ldots ,\nu_{0}\}.$}
Consider the curve $C^\prime=\left\{ f=0\right\} $ where $f=y^{2\nu_{0}}-x^{2\nu_{1}}$ and its Saito
basis given by 
\[
\left\{ \omega=\nu_{1}y\textup{d}x-\nu_{0}x\textup{d}y,\textup{d}f\right\} .
\]
For any function $h,$ we can write 
\[
\omega\wedge\left(\textup{d}f+h\omega\right)={2\nu_0\nu_1}f\textup{d}x\wedge\textup{d}y.
\]
We choose $h=x^{k-1}$. In that case, 
\[
\nu_{D}\left(h\right)=\nu_{0}\left(k-1\right)\leq\nu_{0}\left(\nu_{0}-1\right)\leq 2\nu_{1}\nu_{0}-1-\nu_{1}-\nu_{0}.
\]
Following Lemma \ref{lemma-dicritical}, the $1-$form $\textup{d}f+h\omega$ is still dicritical along the central component
$D$ {of the desingularization process of $C'$}. We fix $N-2$ curves $f_{1}=0,\ f_2=0,\ldots ,f_{N-2}=0$ invarant for $\textup{d}f+h\omega$ {attached to the central component $D$ and transversal to $C^{\prime}$}. The Saito criterion leads
to 
\[
f_{1}\cdots f_{N-2}\omega\wedge\left(\textup{d}f+h\omega\right)=2\nu_0\nu_1f_{1}\cdots f_{N-2}f\textup{d}x\wedge\textup{d}y
\]
which is the Saito basis for $C=\left\{ f_{1}\cdots f_{N-2}f=0\right\}$.
Thus 
\begin{align*}
\mathfrak{s}(C) & =\min\left\{\nu\left(f_{1}\cdots f_{N-2}\omega\right),\nu\left(\textup{d}f+h\omega\right)\right\}\\
 & =\min\left\{\left(N-2\right)\nu_{0}+1,2\nu_{0}-1,k\right\}=k
\end{align*}
since $N\geq3.$

It remains to treat the case $N=2.$ Suppose first $\nu_{0}\ge2.$
If $k\leq\left[\frac{\nu_{0}}{2}\right]$ the same argument as before
ensures the property. Suppose that {$k\in\{\left[\frac{\nu_{0}}{2}\right]+1,\ldots ,\nu_{0}\}.$}
Notice that according to \cite{moduligenz}, the property is true for $k=\nu_{0}$. Thus we can suppose $k\leq\nu_{0}-1$. Consider the curve
$C^\prime=\left\{ f=y^{\nu_{0}}-x^{\nu_{1}}=0\right\}$ and its Saito basis given by 
\[
\left\{ \omega=\nu_{1}y\textup{d}x-\nu_{0}x\textup{d}y,\textup{d}f\right\} .
\]
For any function $h,$ we get $
\omega\wedge\left(\textup{d}f+h\omega\right)=\nu_0\nu_1f\textup{d}x\wedge\textup{d}y$. In particular, for $h=x^{k-1}$ we have 
\[
\nu_{D}\left(h\right)=\nu_{0}\left(k-1\right)\leq\nu_{0}\left(\nu_{0}-2\right)\leq\nu_{1}\nu_{0}-1-\nu_{1}-\nu_{0}
\]
which is true under the assumption the assumption that $\nu_{0}\geq2.$
Following Lemma \ref{lemma-dicritical}, the $1-$form $\textup{d}f+h\omega$ is dicritical along the central component
$D$ {of the desingularization process of $C'$}. We choose one curve $f_{1}=0$ invariant for $\textup{d}f+h\omega$
attached to $D$ and transversal to $C^{\prime}$. The Saito criterion leads to 
\[
f_{1}\omega\wedge\left(\textup{d}f+h\omega\right)=\nu_0\nu_1f_{1}f\textup{d}x\wedge\textup{d}y
\]
which $\{f_1\omega, \textup{d}f+h\omega\}$ is the Saito basis for $C=\left\{ f_{1}f=0\right\}$. Expanding the expression of the form $\textup{d}f+h\omega$, we get 
\[
\nu\left(\textup{d}f+h\omega\right)=k
\]
and thus
\begin{align*}
\mathfrak{s}(C) & =\min\left\{\nu\left(f_{1}\omega\right),\nu\left(\textup{d}f+h\omega\right)\right\}\\
 & =\min\left\{\nu_{0}+1,k\right\}=k.
\end{align*}
For $N=2$ and $\nu_{0}=1$, then the set $\{1,\ldots ,\left[\frac{N\nu_{0}}{2}\right]\}$
reduces to integer $1$ and the property is clear regarless the value
of $\nu_{1},$ which concludes the proof of the theorem.
\end{proof}

\bibliographystyle{plain}


\vspace{0.75cm}

\noindent {\sc Yohann Genzmer}\\
	Universit\'e Paul Sabatier \\
    Institut de Math\'ematiques de Toulouse \\
	118 route de Narbonne \\
    F-31062, Toulouse Cedex 9 \\
	France. 
	
\noindent {yohann.genzmer@math.univ-toulouse.fr}
	\vspace{0.5cm}

\noindent {\sc Marcelo Escudeiro Hernandes}\\
	Universidade Estadual de Maring\'a \\
    Departamento de Matem\'atica \\
	Av. Colombo 5790 \\ 
    Maring\'a-PR 87020-900 \\
	Brazil. 
	
\noindent {mehernandes@uem.br}
	\vspace{0.3cm}
\end{document}